\tikzset{commutative diagrams/.cd}
\tikzstyle{mydashed}=[dash pattern=on 1.5pt off 1.5pt]
\def\id{\mathrm{id}}
\let\map=\xrightarrow
\newcommand{\bdot}{\bm\cdot}
\newcommand{\Kb}{\mathbb{K}}
\newcommand{\Fc}{\mathcal{F}}
\newcommand{\Uc}{\mathcal{U}}
\newcommand{\Vc}{\mathcal{V}}
\newcommand{\Wc}{\mathcal{W}}
\newcommand{\Cs}{\mathsf{C}}
\newcommand{\gVec}{\mathrm{Vec}}
\newcommand{\tone}{\mathbf{1}}
\newcommand{\tp}{\mathbf{p}}
\numberwithin{equation}{section}
\newtheorem{Theorem}{Theorem}[section]
\newtheorem{Lemma}[Theorem]{Lemma}
\newtheorem{Proposition}[Theorem]{Proposition}
 { \theoremstyle{definition}
\newtheorem{Definition}[Theorem]{Definition}
\newtheorem{Example}[Theorem]{Example} }
\begin{document}

\allowdisplaybreaks

\newcommand{\arXivNumber}{2003.11127}

\renewcommand{\PaperNumber}{066}

\FirstPageHeading

\ShortArticleName{Dendriform Algebras Relative to a Semigroup}

\ArticleName{Dendriform Algebras Relative to a Semigroup}

\Author{Marcelo AGUIAR}

\AuthorNameForHeading{M.~Aguiar}

\Address{Department of Mathematics, Cornell University, Ithaca, NY 14853, USA}
\Email{\href{mailto:maguiar@math.cornell.edu}{maguiar@math.cornell.edu}}
\URLaddress{\url{http://www.math.cornell.edu/~maguiar}}

\ArticleDates{Received April 09, 2020, in final form June 29, 2020; Published online July 11, 2020}

\Abstract{Loday's \emph{dendriform} algebras and its siblings \emph{pre-Lie} and \emph{zinbiel} have received attention over the past two decades. In recent literature, there has been interest in a~gene\-ralization of these types of algebra in which each individual operation is replaced by a family of operations indexed by a~fixed semigroup $S$. The purpose of this note is twofold. First, we add to the existing work by showing that a~similar extension is possible already for the most familiar types of algebra: commutative, associative, and Lie. Second, we show that these concepts arise naturally and in a unified manner from a categorical perspective. For this, one simply has to consider the standard types of algebra but in reference to the monoidal category of $S$-graded vector spaces.}

\Keywords{dendriform algebra; monoidal category; dimonoidal category}

\Classification{17A30; 18C40; 18M05}

\section{Introduction}\label{s:intro}

This note is concerned with various types of algebra: the familiar commutative, associative, and Lie algebras, as well as the related notions of zinbiel, dendriform and pre-Lie algebras. The former two were introduced by Loday \cite{Loday01}, the latter is more classical and goes back to Gerstenhaber \cite{Gerstenhaber63} and Vinberg \cite{Vinberg63}. To the former list one may also add Poisson algebras (which combine commutative and Lie into one structure), and to the latter, the pre-Poisson algebras of \cite{Aguiar00}.
The definitions of all these types of algebra may be found in \cite{Loday01} and \cite{Aguiar00}, whose notation we follow.

Very recently, there has been a substantial amount of work along the following lines. Let~$S$ be a semigroup. For each of the above types of algebra, one wishes to construct another, in which each of the defining operations $\mu$ is replaced by a family $\{\mu_\alpha\}_{\alpha\in S}$ of operations indexed by~$S$. The point is to replace the defining axioms for each type of algebras by a suitable set of new axioms. When~$S$ is the terminal semigroup (the trivial monoid), one should recover the original type of algebra. In the literature, these new types are called \emph{family algebras}. In this note, we employ the terminology \emph{$S$-relative} algebras for a closely related notion. The relationship is explained in Section~\ref{s:cat}.

The present literature has carried out this program on a case-by-case basis, by treating each type of algebra separately. Thus, a definition of dendriform family algebras was introduced in~\cite{ZG19}, and definitions of pre-Lie, zinbiel and pre-Poisson family algebras were introduced in~\cite{MZ20}. Related notions have been considered in \cite{Foissy20,ZGM20a,ZGM20b}.
Yet, similar analogs of commutative, associative, and Lie algebras appear to be missing from the literature.

The purpose of this note is to provide a simple uniform perspective on the matter.
This perspective fulfills the above program in one broad stroke, by showing how to arrive at definitions for all of the types of algebra above, and in fact for any type of algebra defined from a linear operad, relative to~$S$.
It also provides unified proofs of the basic relationships between the various types of algebra.
The simple trick is to upgrade the familiar definitions and proofs by formulating them in the setting of the monoidal category of $S$-graded vector spaces. More precisely, one has to work with the Kleisli coalgebras of the adjunction between vector spaces and $S$-graded vector spaces. This is done in Section~\ref{s:cat}.

As an added bonus, we present definitions of $S$-relative commutative, associative, and Lie algebras. This is done at first in Section \ref{s:relS} by hand, and then recovered from the general perspective in Section \ref{ss:uniform}.

Section \ref{s:variant} discusses other settings in which the categorical approach applies. In Section~\ref{ss:dimonoidal} we sketch the interesting possibility of extending these considerations to dimonoidal categories. This achieves the additional goal of incorporating yet another variant of the dendriform notion in the literature, the matching dendriform algebras of \cite{ZGG20}.

We wish to cite two additional papers that tie to the origin of the subject: \cite{EGP07} and \cite{Guo09}.
First, recall the connection between associative algebras and dendriform algebras afforded by Rota--Baxter operators from \cite{Aguiar00}. This was extended to the $S$-relative context in \cite{ZG19} employing Guo's notion of \emph{Rota--Baxter family of operators}. The notion appeared in \cite[p.~541]{EGP07} after a suggestion by Guo, who further discussed it in \cite[Example~1.3(d)]{Guo09}. Our categorical perspective also incorporates Rota--Baxter operators and provides proofs of the results that relate them to the various types of algebra.

This work does not exhaust all possible ramifications. In particular, we only touch briefly upon aspects related to free algebras in Example~\ref{eg:dend-family-matching}. We do not mention dendriform trialgebras~\cite{LR04}, their connection to Rota--Baxter operators with a weight \cite{EF02}, or the algebra types in \cite{BBGN13,GK13,GK14}. Nevertheless, we hope the note is of use in the further development of the subject.

We work with vector spaces over a field $\Kb$. All operations under consideration are binary and $\Kb$-bilinear. As above, $S$ denotes a fixed associative semigroup, occasionally with a unit element (a monoid), and occasionally commutative. The operation on~$S$ is denoted by juxtaposition.

\section{Associative, commutative and Lie relative to a semigroup}\label{s:relS}

In this section we provide ad hoc definitions of $S$-relative associative, commutative, Lie and Poisson algebras. The operations are indexed by pairs of elements of a semigroup $S$. We check that they relate to the family algebras in the literature (dendriform, zinbiel, pre-Lie and pre-Poisson) in the expected manner. Section~\ref{s:cat} presents these notions as particular examples of a~general construction.

\subsection{Associative and dendriform algebras}\label{ss:assoc}

Let $S$ be an associative semigroup.

\begin{Definition}\label{d:assoc} An $S$-relative associative algebra consists of a vector space $A$ equipped with an operation $\bdot_{\alpha,\beta}$ for each pair $(\alpha,\beta)\in S^2$ and such that
\begin{gather}\label{eq:assoc}
(x\bdot_{\alpha,\beta} y)\bdot_{\alpha\beta,\gamma} z = x\bdot_{\alpha,\beta\gamma}(y\bdot_{\beta,\gamma} z)
\end{gather}
for all $x,y,z\in A$, $\alpha,\beta,\gamma\in S$.
\end{Definition}

\begin{Example}\label{eg:cocycle}Let $c\colon S\times S\to\Kb$ be a semigroup $2$-cocycle
\[
c(\alpha,\beta) c(\alpha\beta,\gamma) = c(\alpha,\beta\gamma) c(\beta,\gamma).
\]
We may turn any associative algebra $A$ into an $S$-relative associative algebra defining
\[
x\bdot_{\alpha,\beta} y = c(\alpha,\beta) xy.
\]
\end{Example}

Consider now a vector space $D$ equipped with two operations $\prec_\alpha$ and $\succ_\alpha$ for each $\alpha\in S$ and such that
\begin{subequations}
\begin{gather}\label{eq:dend1}
(x\prec_\alpha y)\prec_\beta z = x\prec_{\alpha\beta} (y\prec_\beta z + y\succ_\alpha z),\\
\label{eq:dend2}
(x \succ_\alpha y)\prec_\beta z = x\succ_\alpha (y\prec_\beta z),\\
\label{eq:dend3}
(x\prec_\beta y + x\succ_\alpha y)\succ_{\alpha\beta} z = x\succ_\alpha (y\succ_\beta z),
\end{gather}
\end{subequations}
for all $x,y,z\in D$ and $\alpha,\beta\in S$. $D$ is then called a \emph{dendriform family algebra} in \cite{MZ20,ZG19,ZGM20a,ZGM20b}.

\begin{Proposition}\label{p:dend-assoc}Let $D$ be as above. Defining
\begin{gather}\label{eq:dend-assoc}
x\bdot_{\alpha,\beta} y = x\succ_\alpha y + x\prec_\beta y
\end{gather}
turns $D$ into an $S$-relative associative algebra.
\end{Proposition}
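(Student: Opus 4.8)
The plan is to verify the single defining identity \eqref{eq:assoc} by substituting the definition \eqref{eq:dend-assoc} into both sides and then simplifying each side to a sum of four terms using the three dendriform family axioms \eqref{eq:dend1}--\eqref{eq:dend3}. Since all operations are $\Kb$-bilinear, every product distributes over sums, so the computation is mechanical once the semigroup subscripts are tracked.

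First I would expand the left-hand side. Writing $x\bdot_{\alpha,\beta}y=x\succ_\alpha y+x\prec_\beta y$ and then applying $\bdot_{\alpha\beta,\gamma}$ separates the outer product into a $\succ_{\alpha\beta}$-part and a $\prec_\gamma$-part:
\[
(x\bdot_{\alpha,\beta}y)\bdot_{\alpha\beta,\gamma}z
=(x\succ_\alpha y+x\prec_\beta y)\succ_{\alpha\beta}z
+(x\succ_\alpha y+x\prec_\beta y)\prec_\gamma z.
\]
The first summand is exactly the left-hand side of \eqref{eq:dend3} and collapses to $x\succ_\alpha(y\succ_\beta z)$. The second summand distributes into $(x\succ_\alpha y)\prec_\gamma z$ and $(x\prec_\beta y)\prec_\gamma z$, which \eqref{eq:dend2} and \eqref{eq:dend1} rewrite as $x\succ_\alpha(y\prec_\gamma z)$ and $x\prec_{\beta\gamma}(y\prec_\gamma z+y\succ_\beta z)$ respectively. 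Thus the left-hand side equals the sum of the four terms $x\succ_\alpha(y\succ_\beta z)$, $x\succ_\alpha(y\prec_\gamma z)$, $x\prec_{\beta\gamma}(y\prec_\gamma z)$, and $x\prec_{\beta\gamma}(y\succ_\beta z)$.

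Next I would expand the right-hand side, where no axiom is needed. From $y\bdot_{\beta,\gamma}z=y\succ_\beta z+y\prec_\gamma z$, applying $x\bdot_{\alpha,\beta\gamma}$ on the left splits into a $\succ_\alpha$-part and a $\prec_{\beta\gamma}$-part, and bilinearity expands these directly into $x\succ_\alpha(y\succ_\beta z)$, $x\succ_\alpha(y\prec_\gamma z)$, $x\prec_{\beta\gamma}(y\succ_\beta z)$, and $x\prec_{\beta\gamma}(y\prec_\gamma z)$. This is the same collection of four terms produced on the left, so the two sides agree and \eqref{eq:assoc} holds.

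I do not anticipate a genuine obstacle: the content of the proposition is precisely that the three dendriform axioms repackage into the single associativity identity. The only point requiring care is the bookkeeping of the semigroup subscripts — checking that the index $\alpha\beta$ generated on the left by \eqref{eq:dend3} matches the first index of the outer product $\bdot_{\alpha\beta,\gamma}$, and that the index $\beta\gamma$ generated by \eqref{eq:dend1} matches the second index of $\bdot_{\alpha,\beta\gamma}$ on the right. Both agreements are immediate from the form of \eqref{eq:assoc}, so nothing beyond the existence of these products in $S$ is used.
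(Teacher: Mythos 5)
Your proof is correct and is essentially the paper's own argument: the paper adds the same three instances of the axioms — \eqref{eq:dend1} with $(\alpha,\beta)$ replaced by $(\beta,\gamma)$, \eqref{eq:dend2} with $\beta$ replaced by $\gamma$, and \eqref{eq:dend3} as stated — to obtain \eqref{eq:assoc}, which is exactly the expansion and term-matching you carry out. Your version merely writes out the bilinearity bookkeeping explicitly.
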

\begin{proof}In \eqref{eq:dend1} replace $(\alpha,\beta)$ for $(\beta,\gamma)$. In \eqref{eq:dend2} replace $\beta$ for $\gamma$ and keep $\alpha$. Adding these equations to \eqref{eq:dend3} one obtains \eqref{eq:assoc}.
\end{proof}

\subsection{Commutative and zinbiel algebras}\label{ss:comm}

Assume now that the semigroup $S$ is commutative.

\begin{Definition}\label{d:comm} An $S$-relative commutative algebra is an $S$-relative associative algebra $A$ that satisfies in addition
\begin{gather}\label{eq:comm}
x\bdot_{\alpha,\beta} y = y\bdot_{\beta,\alpha} x
\end{gather}
for all $x,y\in A$, $\alpha,\beta\in S$.
\end{Definition}

Consider now a vector space $Z$ equipped with an operation $\ast_\alpha$ for each $\alpha\in S$ and such that
\begin{gather}\label{eq:zinb}
x\ast_{\alpha} (y\ast_{\beta} z) = (x\ast_{\alpha} y)\ast_{\alpha\beta} z + (y\ast_{\beta}x)\ast_{\alpha\beta} z
\end{gather}
for all $x,y,z\in Z$ and $\alpha,\beta\in S$. $Z$ is then called a \emph{left zinbiel family algebra} in \cite{MZ20}.

\begin{Lemma}\label{l:zinb}Let $Z$ be as above. Then
\begin{gather}\label{eq:zinb-cor}
x\ast_{\alpha} (y\ast_{\beta} z) = y\ast_{\beta} (x\ast_{\alpha} z)
\end{gather}
for all $x,y,z\in A$, $\alpha,\beta\in S$.
\end{Lemma}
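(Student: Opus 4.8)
The plan is to prove the symmetry relation \eqref{eq:zinb-cor} directly from the zinbiel family axiom \eqref{eq:zinb}, exploiting the symmetry already visible on the right-hand side of that axiom. The key observation is that the right-hand side of \eqref{eq:zinb}, namely $(x\ast_{\alpha} y)\ast_{\alpha\beta} z + (y\ast_{\beta}x)\ast_{\alpha\beta} z$, is almost symmetric under the simultaneous swap $(x,\alpha)\leftrightarrow(y,\beta)$: performing that swap sends it to $(y\ast_{\beta} x)\ast_{\beta\alpha} z + (x\ast_{\alpha}y)\ast_{\beta\alpha} z$. Since $S$ is assumed commutative we have $\alpha\beta=\beta\alpha$, so this swapped expression equals the original right-hand side verbatim.

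First I would write down \eqref{eq:zinb} as given, with left-hand side $x\ast_{\alpha}(y\ast_{\beta}z)$. Then I would write down the instance of \eqref{eq:zinb} obtained by the substitution $(x,\alpha)\leftrightarrow(y,\beta)$, whose left-hand side is $y\ast_{\beta}(x\ast_{\alpha}z)$ and whose right-hand side is the swapped expression described above. Invoking commutativity of $S$ to identify $\alpha\beta$ with $\beta\alpha$, the two right-hand sides coincide. Equating the two left-hand sides through their common right-hand side yields exactly \eqref{eq:zinb-cor}.

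I do not anticipate a genuine obstacle here; the result is a one-line consequence once the substitution is set up correctly. The only point requiring care is that the commutativity hypothesis on $S$ is essential and must be used explicitly to match the subscripts $\alpha\beta$ and $\beta\alpha$ on the two middle operations; without it the two right-hand sides would carry different indices and the argument would break. I would therefore make sure to flag the use of commutativity of $S$ at the step where the subscripts are identified.
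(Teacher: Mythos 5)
Your proof is correct and is essentially identical to the paper's: both apply the substitution $(x,\alpha)\leftrightarrow(y,\beta)$ to the zinbiel family axiom, note that commutativity of $S$ makes the two right-hand sides coincide, and equate the left-hand sides. Your explicit flagging of where commutativity is used matches the paper's "recalling that $S$ is commutative."
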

\begin{proof}
Replacing $(x,y)$ for $(y,x)$ and $(\alpha,\beta)$ for $(\beta,\alpha)$ in \eqref{eq:zinb} one finds
\[
y\ast_{\beta} (x\ast_{\alpha} z) = (y\ast_{\beta} x)\ast_{\beta\alpha} z + (x\ast_{\alpha} y)\ast_{\beta\alpha} z.
\]
Comparing to \eqref{eq:zinb} and recalling that $S$ is commutative, \eqref{eq:zinb-cor} follows.
\end{proof}

\begin{Proposition}\label{p:principle-family}
Let $D$ and $Z$ be a dendriform and a zinbiel family algebra, respectively.
\begin{enumerate}[$(i)$]\itemsep=0pt
\item\label{it:dend-zinb} Suppose
$x\succ_{\alpha} y = x\prec_{\alpha} y$ for all $x,y\in D$, $\alpha\in S$.
Then defining
$
x\ast _{\alpha} y = x\succ_{\alpha} y
$
turns $D$ into a left zinbiel family algebra.
\item\label{it:zinb-dend} Defining
 $
x\prec _{\alpha} y = y\ast _{\alpha} x
$ and
$
x\succ _{\alpha} y = x \ast _{\alpha} y
$
turns $Z$ into a dendriform family algebra. Moreover,
$
x\succ _{\alpha} y = y\prec _{\alpha}x.
$
\end{enumerate}
\end{Proposition}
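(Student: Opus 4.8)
The plan is to prove both parts by a single mechanism: substitute the defining relations among $\prec_\alpha$, $\succ_\alpha$ and $\ast_\alpha$ into each relevant axiom and recognise the result as either the zinbiel identity \eqref{eq:zinb} or its consequence \eqref{eq:zinb-cor} supplied by Lemma~\ref{l:zinb}. Beyond careful bookkeeping of which variable occupies the first slot and of the semigroup products of indices, no genuine computation is needed; commutativity of $S$ will enter at exactly the one spot where two index products $\alpha\beta$ and $\beta\alpha$ must be identified.

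I would dispatch part $(ii)$ first, as it is the constructive direction. With $x\succ_\alpha y=x\ast_\alpha y$ and $x\prec_\alpha y=y\ast_\alpha x$, the ``moreover'' is immediate, since both $x\succ_\alpha y$ and $y\prec_\alpha x$ equal $x\ast_\alpha y$. It then remains to check the three dendriform axioms. Substituting the definitions into \eqref{eq:dend3}, the left side becomes $(y\ast_\beta x)\ast_{\alpha\beta}z+(x\ast_\alpha y)\ast_{\alpha\beta}z$ and the right side becomes $x\ast_\alpha(y\ast_\beta z)$, so \eqref{eq:dend3} is literally \eqref{eq:zinb}. Substituting into \eqref{eq:dend1} produces $z\ast_\beta(y\ast_\alpha x)=(z\ast_\beta y)\ast_{\alpha\beta}x+(y\ast_\alpha z)\ast_{\alpha\beta}x$, which is \eqref{eq:zinb} read off at $(z,y,x)$ with indices $(\beta,\alpha)$, the identification $\beta\alpha=\alpha\beta$ being the sole use of commutativity of $S$. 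Finally \eqref{eq:dend2} becomes $z\ast_\beta(x\ast_\alpha y)=x\ast_\alpha(z\ast_\beta y)$, which is exactly \eqref{eq:zinb-cor}; hence \eqref{eq:dend2} holds by Lemma~\ref{l:zinb}. This shows the defined operations make $Z$ a dendriform family algebra.

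Part $(i)$ is the reverse passage and is even shorter. Under the hypothesis all the data are controlled by the single operation $\ast_\alpha:=\succ_\alpha$, with $\prec_\alpha$ entering as the transposed product, so that the summand $x\prec_\beta y$ of \eqref{eq:dend3} contributes a factor $y\ast_\beta x$. Substituting into \eqref{eq:dend3} therefore returns precisely the zinbiel identity \eqref{eq:zinb} for $\ast$. Since a left zinbiel family algebra is defined by the single axiom \eqref{eq:zinb}, this already settles part $(i)$, and the remaining two dendriform axioms are not needed.

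I expect the only obstacle to be notational rather than conceptual: tracking which element lands in the first argument after the substitutions, so that the transposed term $(y\ast_\beta x)$, and not $(x\ast_\beta y)$, is the one produced in \eqref{eq:zinb} out of \eqref{eq:dend3}. This is exactly the point where the relation between $\prec_\alpha$ and $\succ_\alpha$ in the hypothesis of part $(i)$ is used, and the parallel bookkeeping for \eqref{eq:dend1} in part $(ii)$ is where the commutativity of $S$ becomes indispensable.
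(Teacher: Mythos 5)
Your treatment of part (\emph{ii}) is correct and is exactly the paper's argument: after substituting $x\prec_\alpha y = y\ast_\alpha x$ and $x\succ_\alpha y = x\ast_\alpha y$, axioms \eqref{eq:dend1} and \eqref{eq:dend3} become instances of the zinbiel identity \eqref{eq:zinb} (the former after identifying $\beta\alpha$ with $\alpha\beta$), and \eqref{eq:dend2} becomes \eqref{eq:zinb-cor}, i.e., Lemma~\ref{l:zinb}. One small imprecision: commutativity of $S$ does not enter at only one spot, since the proof of Lemma~\ref{l:zinb}, which you invoke for \eqref{eq:dend2}, also uses it.

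The issue is in part (\emph{i}). Your key step reads the hypothesis as saying that $\prec_\alpha$ is the \emph{transpose} of $\succ_\alpha$, so that the summand $x\prec_\beta y$ in \eqref{eq:dend3} contributes $y\ast_\beta x$. But the hypothesis as printed is $x\succ_\alpha y = x\prec_\alpha y$, i.e., the two operations are literally equal; under that reading $x\prec_\beta y = x\ast_\beta y$, and \eqref{eq:dend3} yields $(x\ast_\beta y)\ast_{\alpha\beta}z + (x\ast_\alpha y)\ast_{\alpha\beta}z = x\ast_\alpha(y\ast_\beta z)$, which is not \eqref{eq:zinb} --- the second summand has its arguments in the wrong order, and nothing in the remaining axioms lets you swap them. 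So your argument establishes the statement with hypothesis $x\succ_\alpha y = y\prec_\alpha x$, not the statement as written. That transposed hypothesis is almost certainly the intended one: it is the uniform specialization of the condition $\succ = \prec\circ\sigma$ in Proposition~\ref{p:principle-smon}, it matches the ``moreover'' clause of part (\emph{ii}), and it is the condition in the cited \cite[Proposition~5.2]{MZ20}. In other words the statement contains a typo rather than your proof an error, but you must say explicitly that you are using the transposed form, because under the literal hypothesis the proof collapses at precisely the step you single out as the delicate one.
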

\begin{proof}
Consider \eqref{it:zinb-dend}: \eqref{eq:dend1} and \eqref{eq:dend3} follow from \eqref{eq:zinb} and \eqref{eq:dend2} follows from \eqref{eq:zinb-cor}. The proof of \eqref{it:dend-zinb} is similar
(and appears in \cite[Proposition 5.2]{MZ20}).
\end{proof}

\begin{Proposition}\label{p:zinb-comm}
Let $Z$ be as above. Defining
\begin{gather}\label{eq:zinb-comm}
x\bdot_{\alpha,\beta} y = x\ast_\alpha y + y\ast_\beta x
\end{gather}
turns $Z$ into an $S$-relative commutative algebra.
\end{Proposition}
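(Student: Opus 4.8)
The plan is to verify the two defining axioms of an $S$-relative commutative algebra for the operations $\bdot_{\alpha,\beta}$ introduced in \eqref{eq:zinb-comm}, namely the commutativity \eqref{eq:comm} and the associativity \eqref{eq:assoc}. The commutativity is immediate and uses no hypothesis on $Z$: expanding the definition gives $x\bdot_{\alpha,\beta} y = x\ast_\alpha y + y\ast_\beta x$ and $y\bdot_{\beta,\alpha} x = y\ast_\beta x + x\ast_\alpha y$, and these two expressions coincide. So the substance of the argument is associativity, for which I would expand each side of \eqref{eq:assoc} into four summands and then reconcile them using the two identities available for $Z$: the defining zinbiel axiom \eqref{eq:zinb} and its consequence \eqref{eq:zinb-cor} established in Lemma~\ref{l:zinb}.

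Concretely, I would first expand the left-hand side. Substituting $x\bdot_{\alpha,\beta} y = x\ast_\alpha y + y\ast_\beta x$ and then applying $\bdot_{\alpha\beta,\gamma}$ against $z$ produces four terms; the pair $(x\ast_\alpha y)\ast_{\alpha\beta} z + (y\ast_\beta x)\ast_{\alpha\beta} z$ collapses via \eqref{eq:zinb} (with $(\alpha,\beta)$ in the role of the two indices) into the single term $x\ast_\alpha(y\ast_\beta z)$, leaving
\[
(x\bdot_{\alpha,\beta} y)\bdot_{\alpha\beta,\gamma} z = x\ast_\alpha(y\ast_\beta z) + z\ast_\gamma(x\ast_\alpha y) + z\ast_\gamma(y\ast_\beta x).
\]
Symmetrically, expanding the right-hand side $x\bdot_{\alpha,\beta\gamma}(y\bdot_{\beta,\gamma} z)$ and applying \eqref{eq:zinb} to the pair $(y\ast_\beta z)\ast_{\beta\gamma} x + (z\ast_\gamma y)\ast_{\beta\gamma} x$, which collapses to $y\ast_\beta(z\ast_\gamma x)$, yields
\[
x\bdot_{\alpha,\beta\gamma}(y\bdot_{\beta,\gamma} z) = x\ast_\alpha(y\ast_\beta z) + x\ast_\alpha(z\ast_\gamma y) + y\ast_\beta(z\ast_\gamma x).
\]

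After cancelling the common term $x\ast_\alpha(y\ast_\beta z)$, it remains to match the two residual summands on each side. Here I would invoke Lemma~\ref{l:zinb}: equation \eqref{eq:zinb-cor} rewrites $x\ast_\alpha(z\ast_\gamma y)$ as $z\ast_\gamma(x\ast_\alpha y)$ and $y\ast_\beta(z\ast_\gamma x)$ as $z\ast_\gamma(y\ast_\beta x)$, so the residual of the right-hand side becomes exactly the residual of the left-hand side, completing the proof of \eqref{eq:assoc}. I expect the only delicate point to be the bookkeeping, that is, keeping straight which summands pair up under \eqref{eq:zinb} and which are transported by \eqref{eq:zinb-cor}, since the computation involves no hidden difficulty once the eight terms are laid out. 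Note also that the commutativity of $S$ is already absorbed into Lemma~\ref{l:zinb}, so it need not be reinvoked here.
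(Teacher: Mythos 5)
Your proof is correct, but it takes a different route from the paper's. The paper's proof is a two-line reduction: it first invokes Proposition~\ref{p:principle-family}\eqref{it:zinb-dend} to endow $Z$ with a dendriform family structure via $x\prec_\alpha y = y\ast_\alpha x$ and $x\succ_\alpha y = x\ast_\alpha y$, and then applies Proposition~\ref{p:dend-assoc}, observing that the resulting operation $x\succ_\alpha y + x\prec_\beta y$ is exactly \eqref{eq:zinb-comm}; commutativity is then immediate, as in your argument. You instead verify \eqref{eq:assoc} directly by expanding both sides into four summands each, collapsing one pair on each side via \eqref{eq:zinb}, and matching the residuals via \eqref{eq:zinb-cor}. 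Your computation checks out: on the left, $(x\ast_\alpha y)\ast_{\alpha\beta}z+(y\ast_\beta x)\ast_{\alpha\beta}z = x\ast_\alpha(y\ast_\beta z)$; on the right, $(y\ast_\beta z)\ast_{\beta\gamma}x+(z\ast_\gamma y)\ast_{\beta\gamma}x = y\ast_\beta(z\ast_\gamma x)$ (note this instance needs no commutativity of $S$ since both terms in \eqref{eq:zinb} carry the same index); and the remaining terms match under \eqref{eq:zinb-cor}. In effect you have unwound the paper's reduction, since the proofs of Propositions~\ref{p:dend-assoc} and~\ref{p:principle-family}\eqref{it:zinb-dend} rest on precisely the same two identities you use. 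What your version buys is a self-contained verification that makes the role of \eqref{eq:zinb} versus \eqref{eq:zinb-cor} explicit; what the paper's version buys is brevity and a conceptual explanation (the commutative structure is the symmetrization of the dendriform one induced by the zinbiel structure), which is the theme the categorical Section~\ref{s:cat} later systematizes.
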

\begin{proof}
It follows from Propositions \ref{p:dend-assoc} and \ref{p:principle-family}\eqref{it:zinb-dend} that \eqref{eq:zinb-comm} turns $Z$ into an $S$-relative associative algebra. Axiom \eqref{eq:comm} for commutativity follows immediately from \eqref{eq:zinb-comm}.
\end{proof}

\subsection{Lie and pre-Lie algebras}\label{ss:lie}

We continue to assume that $S$ is commutative.

\begin{Definition}\label{d:lie} An $S$-relative Lie algebra consists of a vector space $L$ equipped with an operation $[-,-]_{\alpha,\beta}$ for each pair $(\alpha,\beta)\in S^2$ and such that
\begin{subequations}
\begin{gather}\label{eq:skew}
[x,y]_{\alpha,\beta} + [y,x]_{\beta,\alpha} = 0,\\
\label{eq:jacobi}
\big[[x,y]_{\alpha,\beta},z\big]_{\alpha\beta,\gamma} + \big[[z,x]_{\gamma,\alpha},y\big]_{\gamma\alpha,\beta} + \big[[y,z]_{\beta,\gamma},x\big]_{\beta\gamma,\alpha} = 0,
\end{gather}
\end{subequations}
for all $x,y,z\in L$, $\alpha,\beta,\gamma\in S$.
\end{Definition}

Consider now a vector space $P$ equipped with an operation $\circ_\alpha$ for each $\alpha\in S$ and such that
\begin{gather}
\label{eq:prelie}
 x\circ_\alpha (y\circ_\beta z) - (x\circ_\alpha y)\circ_{\alpha\beta} z = y \circ_{\beta} (x \circ_{\alpha} z) - (y \circ_{\beta} x)\circ_{\beta\alpha} z
\end{gather}
for all $x,y,z\in P$ and $\alpha,\beta\in S$. $P$ is then called a \emph{left pre-Lie family algebra} in \cite{MZ20}.

\begin{Proposition}\label{p:prelie-lie}Let $P$ be as above. Defining
\begin{gather}\label{eq:prelie-lie}
[x,y]_{\alpha,\beta} = x\circ_\alpha y - y\circ_\beta x
\end{gather}
turns $P$ into an $S$-relative Lie algebra.
\end{Proposition}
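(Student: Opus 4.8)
The plan is to verify the two defining axioms of an $S$-relative Lie algebra directly from the definition \eqref{eq:prelie-lie}. Skew-symmetry \eqref{eq:skew} is immediate and requires no appeal to \eqref{eq:prelie}: since $[x,y]_{\alpha,\beta} = x\circ_\alpha y - y\circ_\beta x$ and $[y,x]_{\beta,\alpha} = y\circ_\beta x - x\circ_\alpha y$, the two expressions are negatives of one another, so their sum vanishes.

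The substance lies in the Jacobi identity \eqref{eq:jacobi}. First I would expand each of the three nested brackets using \eqref{eq:prelie-lie}. For instance,
\[
\big[[x,y]_{\alpha,\beta},z\big]_{\alpha\beta,\gamma} = (x\circ_\alpha y)\circ_{\alpha\beta} z - (y\circ_\beta x)\circ_{\alpha\beta} z - z\circ_\gamma(x\circ_\alpha y) + z\circ_\gamma(y\circ_\beta x),
\]
and similarly for the two cyclically rotated terms. This produces twelve summands in total, four from each nested bracket.

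The key step is to recognize that these twelve summands organize into three groups of four, each group being exactly one instance of the pre-Lie family axiom \eqref{eq:prelie}, so that each group vanishes on its own. Concretely, I would take \eqref{eq:prelie} with the three cyclic assignments $(x,y,z)$, $(z,x,y)$, $(y,z,x)$ of the variables and the matching assignments $(\alpha,\beta)$, $(\gamma,\alpha)$, $(\beta,\gamma)$ of the indices; each such instance accounts for one term of the form $a\circ_\sigma(b\circ_\tau c)$, one of the form $(a\circ_\sigma b)\circ_{\sigma\tau} c$, and the two partners obtained by swapping $a$ and $b$. Summing the three instances then cancels all twelve summands.

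The only point requiring care --- and the place where commutativity of $S$ enters essentially --- is the bookkeeping of the semigroup indices. One must repeatedly identify products such as $\alpha\beta$ with $\beta\alpha$ (and likewise $\gamma\alpha$ with $\alpha\gamma$, and $\beta\gamma$ with $\gamma\beta$) in order to match each expanded summand against the corresponding term of \eqref{eq:prelie}. Since $S$ is assumed commutative in this subsection, these identifications are legitimate, and the grouping proceeds exactly as in the classical proof that a pre-Lie algebra becomes a Lie algebra under the commutator bracket. I expect the verification to be entirely mechanical once the three cyclic instances of \eqref{eq:prelie} have been written down.
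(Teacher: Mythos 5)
Your proposal is correct and follows exactly the paper's argument: skew-symmetry is immediate from the definition of the bracket, and the Jacobi identity is obtained by summing the three instances of the pre-Lie family axiom under cyclic permutation of $(x,y,z)$ and $(\alpha,\beta,\gamma)$, using commutativity of $S$ to match indices such as $\alpha\beta$ and $\beta\alpha$. The paper states this in one line; your write-up just makes the bookkeeping explicit.
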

\begin{proof}To establish \eqref{eq:jacobi} one employs the $3$ instances of \eqref{eq:prelie} obtained from it by cyclic permutations of $(x,y,z)$ and $(\alpha,\beta,\gamma)$. Axiom \eqref{eq:skew} follows immediately from \eqref{eq:prelie-lie}.
\end{proof}

\subsection{Poisson and pre-Poisson algebras}

We continue to assume that $S$ is commutative.

\begin{Definition}\label{d:poisson} An $S$-relative Poisson algebra consists of structures of $S$-relative commutative and Lie algebras on the same vector space $A$ and such that
\begin{gather*}
[x, y\bdot_{\beta,\gamma} z]_{\alpha,\beta\gamma}=[x,y]_{\alpha,\beta}\bdot_{\alpha\beta,\gamma} z+y\bdot_{\beta,\alpha\gamma}[x,z]_{\alpha,\gamma}
\end{gather*}
for all $x,y,z\in A$, $\alpha,\beta,\gamma\in S$.
\end{Definition}

Consider now a vector space $B$ equipped with two operations $\circ_\alpha$ and $\ast_\alpha$ for each $\alpha\in S$ that turn it into a left pre-Lie family algebra and a left zinbiel family algebra, respectively, and are such that
\begin{gather*}
 (x\circ_\alpha y - y \circ_\beta x)\ast_{\alpha\beta} z = x\circ_\alpha (y\ast_\beta z) - y\ast_\beta (x\circ_\alpha z),\\
 (x\ast_\alpha y + y\ast_\beta x)\circ_{\alpha\beta} z = x\ast_\alpha (y\circ_\beta z) + y\ast_\beta (x\circ_\alpha z),
\end{gather*}
for all $x,y,z\in P$ and $\alpha,\beta\in S$. $B$ is then called a \emph{left pre-Poisson family algebra} in \cite{MZ20}.

\begin{Proposition}\label{p:prepoisson-poisson}
Let $B$ be as above. With the operations \eqref{eq:zinb-comm} and \eqref{eq:prelie-lie}, $B$ becomes
an $S$-relative Poisson algebra.
\end{Proposition}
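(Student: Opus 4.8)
The plan is to verify, one at a time, the three ingredients demanded by Definition~\ref{d:poisson}. Two of them are already in hand: Proposition~\ref{p:zinb-comm} shows that the zinbiel operations $\ast_\alpha$ make $B$ into an $S$-relative commutative algebra under the operation \eqref{eq:zinb-comm}, and Proposition~\ref{p:prelie-lie} shows that the pre-Lie operations $\circ_\alpha$ make $B$ into an $S$-relative Lie algebra under the bracket \eqref{eq:prelie-lie}. It remains only to establish the Leibniz identity of Definition~\ref{d:poisson}, and it is here that the two displayed compatibility relations between $\circ$ and $\ast$ must be used.

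For the Leibniz identity I would expand both sides completely in terms of $\circ$ and $\ast$ using \eqref{eq:zinb-comm} and \eqref{eq:prelie-lie}. On the left, $y\bdot_{\beta,\gamma}z = y\ast_\beta z + z\ast_\gamma y$, so $[x, y\bdot_{\beta,\gamma}z]_{\alpha,\beta\gamma}$ splits into the four terms $x\circ_\alpha(y\ast_\beta z)$, $x\circ_\alpha(z\ast_\gamma y)$, $-(y\ast_\beta z)\circ_{\beta\gamma}x$ and $-(z\ast_\gamma y)\circ_{\beta\gamma}x$. On the right, the first summand expands as $(x\circ_\alpha y - y\circ_\beta x)\ast_{\alpha\beta}z + z\ast_\gamma(x\circ_\alpha y - y\circ_\beta x)$ and the second as $y\ast_\beta(x\circ_\alpha z - z\circ_\gamma x) + (x\circ_\alpha z - z\circ_\gamma x)\ast_{\alpha\gamma}y$.

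The decisive step is to apply the first compatibility relation, $(x\circ_\alpha y - y\circ_\beta x)\ast_{\alpha\beta}z = x\circ_\alpha(y\ast_\beta z) - y\ast_\beta(x\circ_\alpha z)$, to the two ``bracket-times-product'' terms on the right: once as written, and once with the pairs $(y,\beta)$ and $(z,\gamma)$ interchanged. After this rewriting, the terms $y\ast_\beta(x\circ_\alpha z)$ and $z\ast_\gamma(x\circ_\alpha y)$ produced by it cancel against the matching terms in the expansions of $z\ast_\gamma(x\circ_\alpha y - y\circ_\beta x)$ and $y\ast_\beta(x\circ_\alpha z - z\circ_\gamma x)$. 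What survives on the right is $x\circ_\alpha(y\ast_\beta z) + x\circ_\alpha(z\ast_\gamma y) - z\ast_\gamma(y\circ_\beta x) - y\ast_\beta(z\circ_\gamma x)$. The two $x\circ_\alpha(\cdots)$ terms already match the left, so the identity reduces to $(y\ast_\beta z + z\ast_\gamma y)\circ_{\beta\gamma}x = y\ast_\beta(z\circ_\gamma x) + z\ast_\gamma(y\circ_\beta x)$, which is exactly the second compatibility relation with $(x,y,z,\alpha,\beta)$ replaced by $(y,z,x,\beta,\gamma)$. This closes the argument.

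I expect no conceptual difficulty here: the argument is a bookkeeping exercise in which the first compatibility relation is used to turn the awkward bracket-times-product terms into ones that cancel, after which the second relation finishes the job. The main place for error is the sign- and index-tracking in the cancellation step, where one must confirm that the mixed terms pair off correctly and that the $S$-indices attached to each $\ast$ and $\circ$ align with those prescribed by Definition~\ref{d:poisson}. It is worth noting that commutativity of $S$ enters only through Propositions~\ref{p:zinb-comm} and \ref{p:prelie-lie}, which supply the underlying commutative and Lie structures; the Leibniz identity itself, as the computation shows, follows from the two compatibility relations with their indices matching directly, with no reindexing of $S$ required.
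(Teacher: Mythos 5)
Your proof is correct. The paper states Proposition~\ref{p:prepoisson-poisson} without proof, and your direct verification is exactly the expected argument: the commutative and Lie structures come from Propositions~\ref{p:zinb-comm} and~\ref{p:prelie-lie}, and the Leibniz identity follows from the two compatibility relations --- the first applied twice (once as written and once with $(y,\beta)$ and $(z,\gamma)$ interchanged) to cancel the mixed terms, the second (reindexed to $(y,z,x,\beta,\gamma)$) to handle the surviving $\circ_{\beta\gamma}x$ terms. I checked the sign and index bookkeeping and it all lines up with Definition~\ref{d:poisson}.
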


\section{The categorical perspective}\label{s:cat}

This section casts the types of algebra from Section~\ref{s:relS} in a general unified setting. The semigroup~$S$ gives rise to the monoidal category of $S$-graded vector spaces. The various types of algebra, and in fact, any type of algebra defined from a linear operad, may be formulated in this setting. This yields a unified approach to the definitions and basic results in the subject.

\subsection{Types of monoid in a monoidal category}\label{ss:mon}

Let $(\Cs,\bullet)$ be monoidal category, not necessarily unital. One may then consider associative monoids in~$\Cs$. An associative monoid is an object~$A$ with a map $\mu\colon A\bullet A\to A$ in $\Cs$ and such that the diagram below commutes.
\begin{gather}\label{eq:assoc-mon}
\begin{gathered}
\xymatrix{
A \bullet A \bullet A\ar[r]^-{\id\bullet\mu}
\ar[d]_{\mu\bullet\id} & A\bullet A \ar[d]^{\mu} \\
A\bullet A\ar[r]_-{\mu} &A.
}
\end{gathered}
\end{gather}
If the monoidal category possesses a unit object, one may consider unital associative monoids.
If the monoidal category is symmetric, one may consider commutative, Lie and Poisson monoids. For more details on the preceding points, see for instance \cite[Sections~1.2.1, 1.2.6 and~1.2.10]{am10}.

One may consider other types of monoid \cite[Section 4.1.1]{am10}. For example, a dendriform monoid in a linear monoidal category $\Cs$ is an object $D$ with maps
\[
\prec \colon \ D \bullet D \to D, \qquad
\succ \colon \ D \bullet D \to D
\]
in $\Cs$ and such that the diagrams below commute, where ${\bdot} = {\prec} + {\succ}$,
\begin{gather}\label{eq:dend-mon}
\begin{gathered}
\xymatrix{
D \bullet D \bullet D \ar[r]^-{\id \bullet \bdot}\ar[d]_{\prec \bullet \id} & D \bullet D \ar[d]^{\prec}\\
D \bullet D \ar[r]_-{\prec} & D,
}
\end{gathered}
\qquad
\begin{gathered}
\xymatrix{
D \bullet D \bullet D \ar[r]^-{\id \bullet \prec}\ar[d]_{\succ \bullet \id} & D \bullet D \ar[d]^{\succ}\\
D \bullet D \ar[r]_-{\prec} & D,
}
\end{gathered}
\qquad
\begin{gathered}
\xymatrix{
D \bullet D \bullet D \ar[r]^-{\id \bullet \succ}\ar[d]_{\bdot \bullet \id} & D \bullet D \ar[d]^{\succ}\\
D \bullet D \ar[r]_-{\succ} & D.
}
\end{gathered}
\end{gather}

If the linear monoidal category is symmetric, one may consider zinbiel, pre-Lie and pre-Poisson monoids. More generally, for any operad $\tp$ (in the category of vector spaces) one may consider $\tp$-monoids in a linear symmetric monoidal category~$\Cs$ \cite[Section~4.2]{am10}.

In a linear monoidal category, one may also consider Rota--Baxter operators of various types. For example, a Rota--Baxter operator on an associative monoid $(A,\mu)$ is a map $R\colon A\to A$ such that the diagram below commutes:
\begin{gather}\label{eq:RB-mon}
\begin{gathered}
\xymatrix@C+1.5pc{
A \bullet A \ar[r]^-{R\bullet\id+\id\bullet R} \ar[d]_{R\bullet R} & A\bullet A \ar[r]^{\mu} & A \ar[d]^{R} \\
A\bullet A \ar[rr]_-{\mu} & & A.
}
\end{gathered}
\end{gather}
Weighted Rota--Baxter operators may also be considered.

The basic principles relating the various types of algebra remain true at this more general level. We list a handful of them. For the proofs one simply formulates the standard arguments in terms of commutative diagrams.

\begin{Proposition}\label{p:principle-mon} Let $\Cs$ be a linear monoidal category.
\begin{enumerate}[$(i)$]\itemsep=0pt
\item\label{it:dend-assoc} If $(D,\prec,\succ)$ is a dendriform monoid in $\Cs$, then defining
\[
\big(D\bullet D \map{\bdot} D\big) = \big(D\bullet D \map{\succ} D\big) +
\big(D\bullet D \map{\prec} D\big)
\]
turns $D$ into an associative monoid in $\Cs$.
\item\label{it:baxter-dend}
 If $R$ is a Rota--Baxter operator on an associative monoid $(A,\mu)$ in $\Cs$, then defining
\begin{gather*}
\big(A\bullet A \map{\prec} A\big) = \big(A\bullet A \map{\id\bullet R} A\bullet A \map{\mu} A\big),
\qquad \text{and}\\
\big(A\bullet A \map{\succ} A\big) = \big(A\bullet A \map{R\bullet\id} A\bullet A \map{\mu} A\big),
\end{gather*}
turns $A$ into a dendriform monoid in $\Cs$.
\end{enumerate}
\end{Proposition}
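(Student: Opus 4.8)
The plan is to prove both parts as diagrammatic renderings of the corresponding element-wise arguments, carrying out every calculation on composites of morphisms using only three ingredients: additivity of the hom-sets (bilinearity, which lets one expand sums inside composites), functoriality of $\bullet$ together with the interchange law $(f\bullet\id)\circ(\id\bullet g) = f\bullet g = (\id\bullet g)\circ(f\bullet\id)$, and the two defining diagrams \eqref{eq:assoc-mon} and \eqref{eq:RB-mon}. Part \eqref{it:dend-assoc} is the diagrammatic form of Proposition \ref{p:dend-assoc}, and part \eqref{it:baxter-dend} is the categorical form of the classical passage from a Rota--Baxter operator to a dendriform structure.

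For part \eqref{it:dend-assoc}, set ${\bdot}={\succ}+{\prec}$ and expand each leg of the associativity diagram \eqref{eq:assoc-mon} by bilinearity into four composites. Collecting the two $\succ$-terms in the first leg via $\succ\circ(\succ\bullet\id)+\succ\circ(\prec\bullet\id)=\succ\circ(\bdot\bullet\id)$, one obtains
\[
\bdot\circ(\bdot\bullet\id) = \succ\circ(\bdot\bullet\id) + \prec\circ(\succ\bullet\id) + \prec\circ(\prec\bullet\id),
\]
and, collecting the two $\prec$-terms in the second leg,
\[
\bdot\circ(\id\bullet\bdot) = \succ\circ(\id\bullet\succ) + \succ\circ(\id\bullet\prec) + \prec\circ(\id\bullet\bdot).
\]
The three diagrams of \eqref{eq:dend-mon} identify the three summands in the first expression with the three in the second: the third diagram gives $\succ\circ(\bdot\bullet\id)=\succ\circ(\id\bullet\succ)$, the second gives $\prec\circ(\succ\bullet\id)=\succ\circ(\id\bullet\prec)$, and the first gives $\prec\circ(\prec\bullet\id)=\prec\circ(\id\bullet\bdot)$. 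Adding them yields the commutativity of \eqref{eq:assoc-mon}.

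For part \eqref{it:baxter-dend}, I would first record that ${\bdot}={\prec}+{\succ}=\mu\circ(R\bullet\id+\id\bullet R)$, so that the Rota--Baxter diagram \eqref{eq:RB-mon} may be read as the single identity $R\circ{\bdot}=\mu\circ(R\bullet R)$ of maps $A\bullet A\to A$. One then checks the three diagrams of \eqref{eq:dend-mon} in turn. Writing out each leg as a composite and using the interchange law to slide $R$ past $\mu$ in the untouched tensor factors, both legs of the third diagram reduce to $\mu\circ(\mu\bullet\id)\circ(R\bullet R\bullet\id)$, the left leg via $R\circ{\bdot}=\mu\circ(R\bullet R)$ and the right leg via associativity \eqref{eq:assoc-mon}. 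Likewise both legs of the first diagram reduce to $\mu\circ(\mu\bullet\id)\circ(\id\bullet R\bullet R)$, again using the Rota--Baxter identity on one side and associativity on the other, while both legs of the second diagram reduce to $\mu\circ(\mu\bullet\id)\circ(R\bullet\id\bullet R)$ using associativity alone.

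The computations are entirely routine; the only point requiring care is bookkeeping. One must translate each informal prescription ``apply $R$ in this slot and $\mu$ in those slots'' into an unambiguous composite of $\bullet$-products and reassociate the tensor factors consistently, which is precisely where the interchange law is invoked. I expect no coherence subtleties to intervene, since $\Cs$ is not assumed unital and the arguments never refer to a unit object or its structural isomorphisms.
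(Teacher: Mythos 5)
Your proposal is correct and is precisely what the paper intends: the paper gives no detailed proof, saying only that ``one simply formulates the standard arguments in terms of commutative diagrams,'' and your calculation carries out exactly that translation, with the correct matching of the three dendriform axioms in part~\eqref{it:dend-assoc} and the correct reduction of all three diagrams of~\eqref{eq:dend-mon} via the identity $R\circ{\bdot}=\mu\circ(R\bullet R)$ and associativity in part~\eqref{it:baxter-dend}.
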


\begin{Proposition}\label{p:principle-smon}
Let $\Cs$ be a linear symmetric monoidal category, with symmetry
\[
\sigma\colon \ X\bullet Y\to Y\bullet X.
\]
\begin{enumerate}[$(i)$]\itemsep=0pt
\item If $(D,\prec,\succ)$ is a dendriform monoid in $\Cs$, then defining
\[
\big(D\bullet D \map{\circ} D\big) = \big(D\bullet D \map{\succ} D \big) -
\big(D\bullet D\map{\sigma} D\bullet D \map{\prec} D\big)
\]
turns $D$ into a left pre-Lie monoid in $\Cs$.
\item Let $(D,\prec,\succ)$ be a dendriform monoid in $\Cs$ for which
\[
\big(D\bullet D \map{\succ} D\big) = \big(D\bullet D\map{\sigma} D\bullet D \map{\prec} D\big).
\]
Then defining
\[
\big(D\bullet D \map{\ast} D\big) = \big(D\bullet D \map{\succ} D\big)
\]
turns $D$ into a left zinbiel monoid in $\Cs$.
\item Let $(Z,\ast)$ be a left zinbiel monoid in $\Cs$. Then defining
 \[
\big(Z\bullet Z \map{\prec} Z\big) = \big(Z\bullet Z \map{\sigma} Z\bullet Z \map{\ast} Z\big)
\qquad \text{and}\qquad
\big(Z\bullet Z \map{\succ} Z\big) = \big(Z\bullet Z \map{\ast} Z\big),
\]
turns $Z$ into a dendriform monoid in $\Cs$. Moreover,
\[
\big(Z\bullet Z \map{\succ} Z\big) = \big(Z\bullet Z\map{\sigma} Z\bullet Z \map{\prec} Z\big).
\]
\end{enumerate}
\end{Proposition}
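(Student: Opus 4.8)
The three statements are the commutative-diagram forms of Proposition~\ref{p:principle-family} and of the classical passage from dendriform to pre-Lie algebras, so the plan is simply to transcribe those element-level arguments as identities of morphisms in $\Cs$. The only new ingredient is the symmetry $\sigma$, which plays the role of transposing the first two inputs, together with the relation $(\sigma\bullet\id)\circ(\sigma\bullet\id)=\id$, which plays the role that commutativity of $S$ played in Lemma~\ref{l:zinb}. I write the zinbiel axiom as the morphism identity
\[
\ast\circ(\id\bullet\ast)=\ast\circ(\ast\bullet\id)\circ(\id+\sigma\bullet\id),
\]
the diagram form of \eqref{eq:zinb}, and the left pre-Lie axiom (the diagram form of \eqref{eq:prelie}) as the assertion that the associator is fixed by $\sigma\bullet\id$.

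I would treat (iii) first. Here ${\succ}={\ast}$ and ${\prec}={\ast}\circ\sigma$, so ${\bdot}={\ast}\circ(\id+\sigma)$. Substituting into the third square of \eqref{eq:dend-mon} reproduces the zinbiel identity above verbatim, while substituting into the first square reproduces that same identity precomposed with the symmetry reversing the three tensor factors; by the naturality and coherence of $\sigma$ both squares hold. For the middle square I first record the diagram form of Lemma~\ref{l:zinb}, namely $\ast\circ(\id\bullet\ast)=\ast\circ(\id\bullet\ast)\circ(\sigma\bullet\id)$, obtained by postcomposing the zinbiel identity with $\sigma\bullet\id$ and collapsing the two resulting summands via $(\sigma\bullet\id)\circ(\sigma\bullet\id)=\id$; the middle square is then this corollary precomposed with a symmetry. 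The final clause is immediate, as ${\prec}\circ\sigma={\ast}\circ\sigma\circ\sigma={\ast}={\succ}$.

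Statement (ii) is the shortest. The hypothesis ${\succ}={\prec}\circ\sigma$ gives ${\prec}={\succ}\circ\sigma$, whence ${\bdot}\bullet\id=(\succ\bullet\id)\circ(\id+\sigma\bullet\id)$. Inserting this into the third square of \eqref{eq:dend-mon} and setting ${\ast}={\succ}$ yields exactly the zinbiel identity; since a zinbiel monoid is defined by that single square, there is nothing more to check.

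The real work is (i). With ${\circ}={\succ}-{\prec}\circ\sigma$, the left pre-Lie axiom asks that the associator
\[
A=\big(D\bullet D\bullet D\map{\id\bullet\circ}D\bullet D\map{\circ}D\big)-\big(D\bullet D\bullet D\map{\circ\bullet\id}D\bullet D\map{\circ}D\big)
\]
satisfy $A=A\circ(\sigma\bullet\id)$. I would expand $A$ into eight composites in $\succ$, $\prec$ and $\sigma$, then apply each square of \eqref{eq:dend-mon} once: the first square to the $\prec$-after-$\prec$ composite, the middle square to the $\prec$-after-$\succ$ composite, and the third square to the $\succ$-after-$\succ$ composite. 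Exactly as in the element computation, two pairs of composites then cancel, and the six survivors group into three pairs, the two members of each pair being interchanged by $\sigma\bullet\id$; hence $A=A\circ(\sigma\bullet\id)$. The main obstacle is precisely this bookkeeping: one must place each $\sigma$ in the correct tensor slot when expanding $\circ$, use naturality of $\sigma$ to slide it past $\succ$ and $\prec$ so that the squares of \eqref{eq:dend-mon} become applicable, and then recognize the surviving six-term morphism as invariant under $\sigma\bullet\id$. Conceptually it is just the standard argument; the entire effort lies in transcribing it faithfully while tracking where each symmetry sits.
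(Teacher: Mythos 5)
Your proposal is correct and follows exactly the route the paper prescribes: the paper gives no explicit proof, saying only that ``one simply formulates the standard arguments in terms of commutative diagrams,'' and your transcription of the element-level computations (the zinbiel identity as $\ast\circ(\id\bullet\ast)=\ast\circ(\ast\bullet\id)\circ(\id+\sigma\bullet\id)$, Lemma~\ref{l:zinb} as its consequence via $(\sigma\bullet\id)^2=\id$, and the six-term associator invariant under $\sigma\bullet\id$ for the pre-Lie case) is precisely that formulation. The only quibble is the word ``postcomposing'' where you mean precomposing with $\sigma\bullet\id$, but the intended operation is unambiguous from the surrounding computation.
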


\subsection[$S$-graded spaces]{$\boldsymbol{S}$-graded spaces}\label{ss:Sgraded}

Let $S$ be an associative semigroup.

An $S$-\emph{graded space} is a collection $\Vc$ of vector spaces $V_\alpha$, one for each $\alpha$ in $S$. A morphism of $S$-graded spaces is similarly defined in terms of a collection of linear maps. Let $\gVec_S$ denote the resulting category.

Given $S$-graded spaces $\Vc$ and $\Wc$, their \emph{Cauchy product} $\Vc\bullet \Wc$ is defined by
\begin{gather}\label{eq:cauchy}
(\Vc\bullet \Wc)_\gamma = \bigoplus_{\gamma=\alpha\beta} V_\alpha\otimes W_\beta.
\end{gather}
This turns $\gVec_S$ into a (not necessarily unital) monoidal category.

If $S$ is commutative, the monoidal category $\gVec_S$ is symmetric with symmetry $\Vc\bullet \Wc\to \Wc\bullet \Vc$ defined by assembling the switch maps
\[
V_\alpha\otimes W_\beta \to W_\beta\otimes V_\alpha, \qquad x\otimes y \mapsto y\otimes x,
\]
into a map $(\Vc\bullet \Wc)_\gamma \to (\Wc\bullet \Vc)_\gamma$, where $\gamma=\alpha\beta=\beta\alpha$.

If $S$ is a monoid with unit element $\omega$, then $\gVec_S$ is unital with unit object $\tone$ defined by
\[
\tone_\alpha = \begin{cases}
\Kb & \text{if $\alpha=\omega$}, \\
0 & \text{otherwise}.
\end{cases}
\]

Given a vector space $V$, let $\Uc(V)$ be the $S$-graded space defined by
\[
\Uc(V)_\alpha = V
\]
for all $\alpha\in S$. We say that an $S$-graded space of the form $\Uc(V)$ is \emph{uniform}.

The Cauchy product of two uniform $S$-graded spaces need not be uniform.

\subsection[$S$-relative algebras]{$\boldsymbol{S}$-relative algebras}\label{ss:uniform}

\begin{Definition}\label{d:Srel-alg}
Let $S$ be a commutative semigroup and $\tp$ an operad. An \emph{$S$-relative $\tp$-algebra} is a $\tp$-monoid in $\gVec_S$ for which the underlying $S$-graded space is uniform.
\end{Definition}

When the operad is nonsymmetric (or more precisely, the symmetrization of a nonsymmetric operad), the semigroup $S$ in Definition \ref{d:Srel-alg} is merely required to be associative. The operads whose algebras are associative and dendriform algebras are nonsymmetric. In particular, the notions of associative and dendriform algebras are defined in relation to any associative semigroup $S$. If $S$ is a monoid, the notion of $S$-relative unital associative algebra is defined. For the notions of $S$-relative commutative and Lie algebras to be defined, the semigroup $S$ should be commutative.

The $S$-relative types of algebra defined in Section \ref{s:relS} are now seen to arise in this general manner.

\begin{Proposition}\label{p:assoc-cat}
 Let $S$ be an associative semigroup. The two notions of $S$-relative associative algebra in Definitions~{\rm \ref{d:assoc}} and~{\rm \ref{d:Srel-alg}} agree.
\end{Proposition}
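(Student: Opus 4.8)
The plan is to fix a vector space $A$ and exhibit a bijection between the two kinds of structure carried by $A$. By Definition~\ref{d:Srel-alg}, an $S$-relative associative algebra in the categorical sense is an associative monoid in $\gVec_S$ (the case of the associative operad) whose underlying $S$-graded space is uniform, hence of the form $\Uc(A)$. So first I would describe the monoid data explicitly. Using the Cauchy product \eqref{eq:cauchy} together with $\Uc(A)_\alpha = A$, one computes
\[
(\Uc(A)\bullet\Uc(A))_\gamma = \bigoplus_{\gamma=\alpha\beta} A\otimes A,
\]
so that a morphism $\mu\colon \Uc(A)\bullet\Uc(A)\to\Uc(A)$ in $\gVec_S$ amounts to a bilinear map $A\otimes A\to A$ for each factorization $\gamma=\alpha\beta$, i.e.\ one operation $\bdot_{\alpha,\beta}$ for each pair $(\alpha,\beta)\in S^2$, where $x\bdot_{\alpha,\beta}y$ is the image of the $(\alpha,\beta)$-summand in degree $\alpha\beta$. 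This identifies the underlying data of the two definitions.

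Next I would translate the associativity square \eqref{eq:assoc-mon} into \eqref{eq:assoc}. Expanding the triple Cauchy product gives
\[
(\Uc(A)\bullet\Uc(A)\bullet\Uc(A))_\delta = \bigoplus_{\delta=\alpha\beta\gamma} A\otimes A\otimes A,
\]
the two bracketings agreeing strictly by associativity of $S$, which is the point at which the associator of $\gVec_S$ reduces to the identity. On the $(\alpha,\beta,\gamma)$-summand I would chase $x\otimes y\otimes z$ around the square: the composite $\mu\circ(\mu\bullet\id)$ first forms $(x\bdot_{\alpha,\beta}y)\otimes z$ in degree $(\alpha\beta)\gamma$ and then applies $\bdot_{\alpha\beta,\gamma}$, yielding $(x\bdot_{\alpha,\beta}y)\bdot_{\alpha\beta,\gamma}z$; dually $\mu\circ(\id\bullet\mu)$ yields $x\bdot_{\alpha,\beta\gamma}(y\bdot_{\beta,\gamma}z)$. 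Commutativity of the square on every summand is therefore exactly \eqref{eq:assoc}, for all $\alpha,\beta,\gamma$.

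Finally, since these manipulations are natural in $A$ — a morphism of such monoids $\Uc(A)\to\Uc(A')$ is a degreewise-constant collection of maps, that is, a single linear map $A\to A'$ intertwining all the operations $\bdot_{\alpha,\beta}$ — the two notions agree as categories, not merely on objects. I expect the only real care to lie in the index bookkeeping for the Cauchy product: correctly matching each summand of the triple product to a factorization $\delta=\alpha\beta\gamma$ and verifying that the two ways of bracketing $\bullet$ land in the same summand, so that the associator may be suppressed. Once that is pinned down, the equivalence of the two axioms is a direct diagram chase.
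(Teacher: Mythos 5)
Your proposal is correct and follows the same route as the paper: identify the components of $\mu\colon \Uc(A)\bullet\Uc(A)\to\Uc(A)$ with the operations $\bdot_{\alpha,\beta}$ via the Cauchy product, then translate the commutativity of the square~\eqref{eq:assoc-mon} summand by summand into~\eqref{eq:assoc}. The paper's proof is just a terser version of this same diagram chase (it leaves the triple-product bookkeeping and the morphism-level statement implicit), so there is nothing to add.
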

\begin{proof}Let $\Uc(A)$ be a uniform associative monoid in $\gVec_S$. We have to show it is an $S$-relative algebra in the sense of Definition~\ref{d:assoc}. The multiplication $\Uc(A)\bdot \Uc(A)\to \Uc(A)$ consists of various linear maps
\[
A\otimes A = A_\alpha\otimes A_\beta \to A_{\alpha\beta} = A.
\]
These are the operations $\bdot_{\alpha,\beta}$ in Definition~\ref{d:assoc}. Axiom~\eqref{eq:assoc-mon} translates into axiom~\eqref{eq:assoc}.
\end{proof}

In the same manner, one has the following.

\begin{Proposition}\label{p:comm-cat} Let $S$ be a commutative semigroup. The two notions of $S$-relative commutative $($Lie, or Poisson$)$ monoid in Definitions~{\rm \ref{d:comm}} {\rm (\ref{d:lie}}, or {\rm \ref{d:poisson})} and {\rm \ref{d:Srel-alg}} agree.
\end{Proposition}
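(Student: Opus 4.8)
The plan is to follow the template set by the proof of Proposition~\ref{p:assoc-cat} in each of the three cases. For each of the operads $\tp$ (commutative, Lie, Poisson), I would start from a uniform $\tp$-monoid $\Uc(A)$ in $\gVec_S$, read off the components of its structure maps as families of operations indexed by $S^2$, and check that the defining commutative diagrams restrict, on each graded summand, to the componentwise axioms of the relevant definition from Section~\ref{s:relS}. The only new ingredient beyond Proposition~\ref{p:assoc-cat} is that one now also uses the symmetry $\sigma$ of $\gVec_S$, which (since $S$ is commutative) sends the summand $A_\alpha\otimes A_\beta$ of $(\Uc(A)\bullet\Uc(A))_\gamma$, with $\gamma=\alpha\beta=\beta\alpha$, to $A_\beta\otimes A_\alpha$ by the switch map.

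For the commutative case, a commutative monoid is an associative monoid with $\mu=\mu\circ\sigma$. The associative part already gives operations $\bdot_{\alpha,\beta}\colon A_\alpha\otimes A_\beta\to A_{\alpha\beta}$ satisfying \eqref{eq:assoc} by Proposition~\ref{p:assoc-cat}, and evaluating $\mu=\mu\circ\sigma$ on $x\otimes y$ with $x\in A_\alpha$, $y\in A_\beta$ yields exactly $x\bdot_{\alpha,\beta}y=y\bdot_{\beta,\alpha}x$, which is~\eqref{eq:comm}. For the Lie case, the bracket $[-,-]\colon L\bullet L\to L$ has components $[-,-]_{\alpha,\beta}\colon L_\alpha\otimes L_\beta\to L_{\alpha\beta}$; its antisymmetry $[-,-]+[-,-]\circ\sigma=0$ translates, just as above, into~\eqref{eq:skew}, while the categorical Jacobi identity, the vanishing of the sum of the three cyclic rotations of $[[-,-],-]$ on $L\bullet L\bullet L$, restricts on the summand $L_\alpha\otimes L_\beta\otimes L_\gamma$ to~\eqref{eq:jacobi}. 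For the Poisson case, the commutative and Lie parts produce operations satisfying Definitions~\ref{d:comm} and~\ref{d:lie} by the two cases just treated, and the single Leibniz-type compatibility diagram restricts on $A_\alpha\otimes A_\beta\otimes A_\gamma$ to the displayed identity in Definition~\ref{d:poisson}.

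The verifications are pure index bookkeeping, and the step demanding the most care is the Jacobi identity. There one must confirm that the two nontrivial cyclic rotations of $x\otimes y\otimes z\in L_\alpha\otimes L_\beta\otimes L_\gamma$ --- namely $z\otimes x\otimes y\in L_\gamma\otimes L_\alpha\otimes L_\beta$ and $y\otimes z\otimes x\in L_\beta\otimes L_\gamma\otimes L_\alpha$, assembled out of $\sigma$ --- produce precisely the bracketing patterns $[[z,x]_{\gamma,\alpha},y]_{\gamma\alpha,\beta}$ and $[[y,z]_{\beta,\gamma},x]_{\beta\gamma,\alpha}$, all three terms landing in the common component $L_{\alpha\beta\gamma}$ because $S$ is commutative. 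The analogous check for the Poisson compatibility is that the second branch of the Leibniz diagram, which swaps $x$ past $y$ before bracketing with $z$, distributes the indices as $y\bdot_{\beta,\alpha\gamma}[x,z]_{\alpha,\gamma}$, again a consequence of commutativity of $S$.
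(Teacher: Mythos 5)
Your proposal is correct and takes essentially the same route as the paper, which gives no separate argument for this proposition but simply asserts it follows ``in the same manner'' as Proposition~\ref{p:assoc-cat}: read off the components of the structure maps on the uniform object and translate each defining diagram, using the switch-map symmetry, into the componentwise axioms. Your index bookkeeping for the symmetry, the cyclic terms in the Jacobi identity, and the Leibniz compatibility is exactly the verification the paper leaves to the reader.
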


\looseness=-1 Suppose $S$ is a monoid with unit element~$\omega$. Definition~\ref{d:Srel-alg} yields the notion of $S$-relative unital associative algebra~$A$. The result is straightforward: $A$ should possess an element $1$ such that
\[
x\bdot_{\alpha,\omega} 1 = x = 1\bdot_{\omega,\alpha} x
\]
for all $x\in A$, $\alpha\in S$.

When $S$ is finite (or more generally when each element of $S$ possesses only a finite number of factorizations), one may in the same manner obtain the notions of $S$-relative coalgebra, $S$-relative bialgebra (if~$S$ is commutative), and their counital versions (if $S$ is a monoid), among others.

\subsection{Forgetting the semigroup}\label{ss:forget}

Let $\gVec$ denote the category of vector spaces. The functor $\Uc$ is part of an adjunction
\[
\begin{tikzcd}[column sep=1cm]
\gVec_S \ar[r, shift left=.6ex, "\Fc"] & \ar[l, shift left=.6ex, "\Uc"] \gVec.
\end{tikzcd}
\]
The left adjoint $\Fc$ is defined by
\[
\Fc(\Vc) = \bigoplus_{\alpha\in S} V_\alpha.
\]
One may thus refer to $\Uc(V)$ as the \emph{cofree} $S$-graded space on the space $V$. The uniform $S$-graded spaces are the Kleisli coalgebras of the adjunction. See \cite[Chapter VI]{maclane98}.

The functor $\Fc$ satisfies
\[
\Fc(\Vc\bullet\Wc)\cong \Fc(\Vc)\otimes\Fc(\Wc).
\]
It also preserves the symmetry of each category. More precisely, it is a linear symmetric strong monoidal functor. For this reason, $\Fc$ sends a monoid of any type in $\gVec_S$ to a monoid (algebra) of the same type in $\gVec$ \cite[Corollary 4.37]{am10}.

Note that on a uniform object $\Vc=\Uc(V)$, we have
\[
\Fc(\Vc) = V\otimes \Kb S.
\]
This means that if $V$ is an $S$-relative algebra of a given type, then $V\otimes \Kb S$ is an ordinary algebra of the same type. This fact has been observed in a few special cases in the literature: for dendriform family algebras in \cite[Theorem 2.11]{ZGM20a}, for pre-Lie family algebras in \cite[Theorem~3.11]{MZ20}.

\begin{Example}\label{eg:cocycle2}
Consider the $S$-relative associative algebra in Example \ref{eg:cocycle}. The corresponding associative algebra is the tensor product $A\otimes\Kb S$ between the given associative algebra $A$ and the semigroup algebra of $S$, the latter twisted by the given $2$-cocycle.
\end{Example}

\subsection[$S$-relative dendriform algebras]{$\boldsymbol{S}$-relative dendriform algebras}\label{ss:dendrel}

Let $S$ be an associative semigroup. We now apply Definition \ref{d:Srel-alg} to the dendriform operad. We make use of the explicit description of dendriform monoids in Section \ref{ss:mon}.

\begin{Proposition}\label{p:dendrel}
An $S$-relative dendriform algebra consists of a vector space $D$ equipped with operations $\prec_{\alpha,\beta}$ and $\succ_{\alpha,\beta}$ for each pair $(\alpha,\beta)\in S^2$ and such that
\begin{subequations}
\begin{gather}
\label{eq:dendrel1}
(x\prec_{\alpha,\beta} y)\prec_{\alpha\beta,\gamma} z = x\prec_{\alpha,\beta\gamma} (y\prec_{\beta,\gamma} z + y\succ_{\beta,\gamma} z),\\
\label{eq:dendrel2}
(x \succ_{\alpha,\beta} y)\prec_{\alpha\beta,\gamma} z = x\succ_{\alpha,\beta\gamma} (y\prec_{\beta,\gamma} z),\\
\label{eq:dendrel3}
(x\prec_{\alpha,\beta} y + x\succ_{\alpha,\beta} y)\succ_{\alpha\beta,\gamma} z = x\succ_{\alpha,\beta\gamma} (y\succ_{\beta,\gamma} z),
\end{gather}
\end{subequations}
for all $x,y,z\in A$, $\alpha,\beta,\gamma\in S$.
\end{Proposition}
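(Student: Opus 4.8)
The plan is to follow the strategy used for Proposition~\ref{p:assoc-cat}, now carrying along two operations $\prec$ and $\succ$ and the three defining diagrams \eqref{eq:dend-mon} of a dendriform monoid. Thus I take a uniform dendriform monoid $\Uc(D)$ in $\gVec_S$ and unpack its structure. Since $\Uc(D)_\alpha = D$ for every $\alpha$, the Cauchy product \eqref{eq:cauchy} reads
\[
(\Uc(D)\bullet\Uc(D))_\gamma = \bigoplus_{\gamma=\alpha\beta} D\otimes D,
\]
and because a morphism out of a direct sum is the same as a family of morphisms on the summands, a map $\prec\colon \Uc(D)\bullet\Uc(D)\to\Uc(D)$ in $\gVec_S$ amounts precisely to a family of linear maps $D\otimes D = D_\alpha\otimes D_\beta \to D_{\alpha\beta} = D$, one for each factorization $\alpha\beta$; these are the operations $\prec_{\alpha,\beta}$. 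The same reasoning applied to $\succ$ produces the operations $\succ_{\alpha,\beta}$, whence $\bdot_{\alpha,\beta} = \prec_{\alpha,\beta}+\succ_{\alpha,\beta}$.

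Next I would translate the three commutative diagrams \eqref{eq:dend-mon} into \eqref{eq:dendrel1}--\eqref{eq:dendrel3} by a diagram chase on homogeneous elements. As the triple Cauchy product decomposes as
\[
(\Uc(D)\bullet\Uc(D)\bullet\Uc(D))_\delta = \bigoplus_{\delta=\alpha\beta\gamma} D\otimes D\otimes D,
\]
each diagram commutes if and only if it commutes on every summand, so it suffices to chase an element $x\otimes y\otimes z$ sitting in the summand indexed by a fixed factorization $\alpha\beta\gamma$. For the first diagram, the path through $\id\bullet\bdot$ sends $x\otimes y\otimes z$ to $x\otimes(y\prec_{\beta,\gamma} z + y\succ_{\beta,\gamma} z)$, which lies in degree $(\alpha,\beta\gamma)$, and is then hit by $\prec_{\alpha,\beta\gamma}$; the path through $\prec\bullet\id$ produces $(x\prec_{\alpha,\beta} y)\otimes z$ in degree $(\alpha\beta,\gamma)$, followed by $\prec_{\alpha\beta,\gamma}$. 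Equating the two outcomes is exactly \eqref{eq:dendrel1}, and the remaining two diagrams yield \eqref{eq:dendrel2} and \eqref{eq:dendrel3} identically. Since every step of this unpacking is a reversible equivalence, no separate converse argument is needed.

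The only point requiring care---and the sole real obstacle---is the degree bookkeeping imposed by the Cauchy product: applying an operation to factors of degrees $\beta$ and $\gamma$ yields a result of degree $\beta\gamma$, so the outer operation along the right-hand paths must be indexed by $(\alpha,\beta\gamma)$, whereas along the left-hand paths one first forms the product degree $\alpha\beta$ and the outer operation is indexed by $(\alpha\beta,\gamma)$. This multiplicative twisting of the grading is precisely what dictates the subscript patterns in \eqref{eq:dendrel1}--\eqref{eq:dendrel3}, and tracking which summand of the triple product one occupies at each stage of the chase is what makes the identification rigorous.
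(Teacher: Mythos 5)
Your proposal is correct and follows exactly the route the paper intends: the paper gives no separate proof of Proposition~\ref{p:dendrel}, deferring to the model of Proposition~\ref{p:assoc-cat}, and your unpacking of the Cauchy product into the families $\prec_{\alpha,\beta}$, $\succ_{\alpha,\beta}$ together with the homogeneous-element chase of the three diagrams in~\eqref{eq:dend-mon} is precisely that argument. The degree bookkeeping you emphasize (outer operation indexed by $(\alpha\beta,\gamma)$ on one path and $(\alpha,\beta\gamma)$ on the other) is indeed the only substantive point, and you handle it correctly.
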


The general construction of an associative monoid from a dendriform monoid given by Proposition \ref{p:principle-mon}\eqref{it:dend-assoc} yields the following when specialized to uniform objects in $\gVec_S$.

\begin{Proposition}\label{p:dend-assoc-rel}
Let $D$ be an $S$-relative dendriform algebra. Defining
\begin{gather}\label{eq:dend-assoc-gen}
x\bdot_{\alpha,\beta} y = x\succ_{\alpha,\beta} y + x\prec_{\alpha,\beta} y
\end{gather}
turns $D$ into an $S$-relative associative algebra.
\end{Proposition}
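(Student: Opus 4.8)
The plan is to deduce the statement directly from the categorical principle already recorded in Proposition \ref{p:principle-mon}\eqref{it:dend-assoc}, avoiding any fresh manipulation of the axioms \eqref{eq:dendrel1}--\eqref{eq:dendrel3}. By Definition \ref{d:Srel-alg}, an $S$-relative dendriform algebra is exactly a dendriform monoid $(\Uc(D),\prec,\succ)$ in the linear monoidal category $(\gVec_S,\bullet)$ whose underlying object is uniform, and an $S$-relative associative algebra is exactly a uniform associative monoid in the same category. Proposition \ref{p:principle-mon}\eqref{it:dend-assoc} applies verbatim with $\Cs=\gVec_S$ and turns $(\Uc(D),\prec,\succ)$ into an associative monoid on $\Uc(D)$ whose multiplication is $\bdot=\succ+\prec$. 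Since $\Uc(D)$ is uniform, this associative monoid is precisely an $S$-relative associative algebra.

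What remains is to translate the abstract morphisms back into the component operations indexed by $S$, exactly as in the proofs of Propositions \ref{p:dendrel} and \ref{p:assoc-cat}. First I would recall that, by the formula \eqref{eq:cauchy} for the Cauchy product, a morphism $\prec\colon\Uc(D)\bullet\Uc(D)\to\Uc(D)$ is the same datum as a family of bilinear maps $D_\alpha\otimes D_\beta=D\otimes D\to D=D_{\alpha\beta}$, one for each factorization $\gamma=\alpha\beta$; these are the operations $\prec_{\alpha,\beta}$ of Proposition \ref{p:dendrel}, and similarly for $\succ$. Next I would use that the sum of two morphisms in the linear category $\gVec_S$ is formed componentwise, so that the $(\alpha,\beta)$-component of $\succ+\prec$ is $\succ_{\alpha,\beta}+\prec_{\alpha,\beta}$, which is the defining formula \eqref{eq:dend-assoc-gen} for $\bdot_{\alpha,\beta}$. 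Finally, the commuting associativity square \eqref{eq:assoc-mon} for $\Uc(D)$, restricted to the summand $D_\alpha\otimes D_\beta\otimes D_\gamma$, unwinds into axiom \eqref{eq:assoc}, just as in Proposition \ref{p:assoc-cat}.

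The only step calling for genuine care, and the one I regard as the main obstacle, is checking that the componentwise reading of Proposition \ref{p:principle-mon}\eqref{it:dend-assoc} really reproduces the index bookkeeping of \eqref{eq:assoc}: the Cauchy product mixes gradings, so the domain $(\Uc(D)\bullet\Uc(D)\bullet\Uc(D))_\delta$ is a direct sum over all factorizations $\delta=\alpha\beta\gamma$, and one must confirm that the abstract diagram restricts summand by summand to give precisely the index pairs $(\alpha,\beta)$, $(\alpha\beta,\gamma)$, $(\alpha,\beta\gamma)$, $(\beta,\gamma)$ occurring in \eqref{eq:assoc}. This is the identical verification already carried out in Proposition \ref{p:assoc-cat}, so no new diagram chase is incurred. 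For completeness I would note that a direct proof is equally available: expanding $\bdot$ on both sides of \eqref{eq:assoc} and applying \eqref{eq:dendrel3}, \eqref{eq:dendrel2}, and \eqref{eq:dendrel1} to the three resulting terms reproduces the argument of Proposition \ref{p:dend-assoc} with the index pairs carried along unchanged.
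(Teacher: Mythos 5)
Your proposal is correct and follows essentially the same route as the paper: the paper offers no separate proof of Proposition~\ref{p:dend-assoc-rel}, but introduces it precisely as the specialization of Proposition~\ref{p:principle-mon}\eqref{it:dend-assoc} to uniform objects in $\gVec_S$, with the componentwise translation handled exactly as in Proposition~\ref{p:assoc-cat}. Your closing remark that one could instead expand $\bdot$ and apply \eqref{eq:dendrel1}--\eqref{eq:dendrel3} directly is also sound, and mirrors the elementary argument the paper gives for Proposition~\ref{p:dend-assoc}.
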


The notion of $S$-relative dendriform algebra differs from the notion of `dendriform family' algebra of Section \ref{s:relS}: for the latter, the operations are indexed by a single element of $S$ rather than by a pair. One has the following relation between the two.

\begin{Proposition}\label{p:dend-relative-family}
Let $D$ be an $S$-relative dendriform algebra. Suppose the operations $\prec_{\alpha,\beta}$ are independent of $\alpha$ and the operations $\succ_{\alpha,\beta}$ are independent of $\beta$. Then $D$ is a dendriform family algebra, and vice versa.
\end{Proposition}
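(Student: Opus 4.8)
The plan is to make the independence hypotheses explicit as the identifications $\prec_{\alpha,\beta}=\prec_\beta$ and $\succ_{\alpha,\beta}=\succ_\alpha$, and then to substitute these into the three defining axioms \eqref{eq:dendrel1}--\eqref{eq:dendrel3} of an $S$-relative dendriform algebra. Under these identifications each two-index operation collapses to a one-index operation indexed by its ``active'' label: $\prec$ retains only the label of its right-hand argument, and $\succ$ only that of its left-hand argument. The claim is then that, after a harmless renaming of the semigroup variables, the resulting identities are precisely the dendriform family axioms \eqref{eq:dend1}--\eqref{eq:dend3}, so that no genuine computation is needed beyond tracking indices.

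Concretely, in \eqref{eq:dendrel1} the substitution turns the subscripts $(\alpha,\beta)$, $(\alpha\beta,\gamma)$, $(\alpha,\beta\gamma)$, $(\beta,\gamma)$, $(\beta,\gamma)$ into $\beta$, $\gamma$, $\beta\gamma$, $\gamma$, $\beta$ respectively, yielding $(x\prec_\beta y)\prec_\gamma z = x\prec_{\beta\gamma}(y\prec_\gamma z + y\succ_\beta z)$; renaming $(\beta,\gamma)$ to $(\alpha,\beta)$ reproduces \eqref{eq:dend1}. The same substitution sends \eqref{eq:dendrel2} to $(x\succ_\alpha y)\prec_\gamma z = x\succ_\alpha(y\prec_\gamma z)$, which is \eqref{eq:dend2} after renaming $\gamma$ to $\beta$, and sends \eqref{eq:dendrel3} directly to $(x\prec_\beta y + x\succ_\alpha y)\succ_{\alpha\beta} z = x\succ_\alpha(y\succ_\beta z)$, which is already \eqref{eq:dend3} verbatim. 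Thus the three $S$-relative axioms become exactly the three family axioms.

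For the converse, starting from a dendriform family algebra one defines the two-index operations by $\prec_{\alpha,\beta}:=\prec_\beta$ and $\succ_{\alpha,\beta}:=\succ_\alpha$, which manifestly satisfy the independence conditions, and one runs the same chain of substitutions in reverse to deduce \eqref{eq:dendrel1}--\eqref{eq:dendrel3} from \eqref{eq:dend1}--\eqref{eq:dend3}. There is essentially no obstacle here beyond bookkeeping: the only point requiring care is to confirm that the dependence conventions of the family operations --- $\prec$ on the right label, $\succ$ on the left label --- are exactly those forced by the independence hypotheses, so that the renamings used in the two directions are mutually inverse and no axiom is lost or duplicated.
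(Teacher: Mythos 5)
Your proposal is correct and follows exactly the paper's (one-line) proof: substitute $\prec_{\alpha,\beta}=\prec_\beta$ and $\succ_{\alpha,\beta}=\succ_\alpha$ into \eqref{eq:dendrel1}--\eqref{eq:dendrel3} and observe that, after renaming the semigroup variables, they become \eqref{eq:dend1}--\eqref{eq:dend3}, with the same bookkeeping run in reverse for the converse. Your index-tracking is accurate throughout, so nothing further is needed.
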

\begin{proof}
Under these assumptions, axioms \eqref{eq:dendrel1}--\eqref{eq:dendrel3} specialize to \eqref{eq:dend1}--\eqref{eq:dend3}.
\end{proof}

Note that combining Propositions \ref{p:dend-assoc-rel} and \ref{p:dend-relative-family} we obtain a (better) proof of Proposition \ref{p:dend-assoc}: when the operations depend only on one variable as above,
\eqref{eq:dend-assoc-gen} becomes \eqref{eq:dend-assoc}.

Proposition~\ref{p:dend-relative-family} raises the possibility that different assumptions on the operations might lead to additional variants of the notion of dendriform algebra. For example, suppose all operations~$\prec_{\alpha,\beta}$ and~$\succ_{\alpha,\beta}$ of an $S$-relative dendriform algebra are independent of~$\beta$. Then axioms \eqref{eq:dendrel1}--\eqref{eq:dendrel3} specialize to the following
\begin{gather*}
(x\prec_{\alpha} y)\prec_{\alpha\beta} z = x\prec_{\alpha} (y\prec_{\beta} z + y\succ_{\beta} z),\\
(x \succ_{\alpha} y)\prec_{\alpha\beta} z = x\succ_{\alpha} (y\prec_{\beta} z),\\
(x\prec_{\alpha} y + x\succ_{\alpha} y)\succ_{\alpha\beta} z = x\succ_{\alpha} (y\succ_{\beta} z).
\end{gather*}
This notion, while meaningful, has not been considered in the literature.

If instead the operations of an $S$-relative dendriform algebra satisfy that $\prec_{\alpha,\beta}$ is independent of $\beta$ and $\succ_{\alpha,\beta}$ is independent of $\alpha$, then axioms \eqref{eq:dendrel1}--\eqref{eq:dendrel3} specialize to a still meaningful but rather peculiar set of axioms.

\subsection[$S$-relative Rota--Baxter operators]{$\boldsymbol{S}$-relative Rota--Baxter operators}\label{ss:baxterrel}

We now specialize the notion of Rota--Baxter operator to uniform objects in~$\gVec_S$. According to Proposition~\ref{p:assoc-cat}, a uniform associative monoid in~$\gVec_S$ is the same as an $S$-relative associative algebra. Axiom~\eqref{eq:RB-mon} yields axiom~\eqref{eq:baxterrel} below.

\begin{Definition}\label{d:baxterrel}
Let $A$ be an $S$-relative associative algebra. A Rota--Baxter operator on $A$ is a family of operators $R_\alpha\colon A\to A$ such that
\begin{gather}\label{eq:baxterrel}
R_\alpha(x)\bdot_{\alpha,\beta} R_\beta(y) = R_{\alpha\beta}\bigl(R_\alpha(x)\bdot_{\alpha,\beta} y+x\bdot_{\alpha,\beta} R_\beta(y)\bigr)
\end{gather}
for all $x,y\in A$, $\alpha,\beta\in S$.
\end{Definition}
The case of weighted Rota--Baxter operators is similar.

Consider now the general construction of a dendriform monoid from a Rota--Baxter operator on an associative monoid given by Proposition \ref{p:principle-mon}\eqref{it:baxter-dend}. Specializing to uniform objects, we have the following.

\begin{Proposition}\label{p:baxter-dend-rel}
Let $R$ be a Rota--Baxter operator on an $S$-relative associative algebra $A$. Defining
\[
x\prec_{\alpha,\beta} y= x\bdot_{\alpha,\beta} R_\beta(y)
\qquad \text{and}\qquad
x\succ_{\alpha,\beta} y = R_\alpha(x) \bdot_{\alpha,\beta} y
\]
turns $A$ into an $S$-relative dendriform algebra.
\end{Proposition}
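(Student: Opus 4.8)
The plan is to deduce this directly from the categorical principle in Proposition~\ref{p:principle-mon}\eqref{it:baxter-dend}, reading everything off on uniform objects. By Proposition~\ref{p:assoc-cat}, the $S$-relative associative algebra $A$ is precisely a uniform associative monoid $\Uc(A)$ in $\gVec_S$, whose multiplication $\mu$ has component $\bdot_{\alpha,\beta}$ on $A_\alpha\otimes A_\beta\to A_{\alpha\beta}$. A morphism $R\colon\Uc(A)\to\Uc(A)$ in $\gVec_S$ is, by the definition of morphisms of $S$-graded spaces, a collection of linear maps $R_\alpha\colon A\to A$, one per grade $\alpha\in S$; and, as already noted in Definition~\ref{d:baxterrel}, reading the Rota--Baxter diagram \eqref{eq:RB-mon} on the component $A_\alpha\otimes A_\beta$ turns it into the family axiom \eqref{eq:baxterrel}. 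Thus the data in the statement is exactly a Rota--Baxter operator on the monoid $\Uc(A)$.

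First I would invoke Proposition~\ref{p:principle-mon}\eqref{it:baxter-dend}: the maps $\prec=\mu\circ(\id\bullet R)$ and $\succ=\mu\circ(R\bullet\id)$ make $\Uc(A)$ a dendriform monoid in $\gVec_S$. This construction does not alter the underlying $S$-graded space, which remains the uniform object $\Uc(A)$; hence by Definition~\ref{d:Srel-alg} the resulting dendriform monoid is an $S$-relative dendriform algebra, automatically satisfying axioms \eqref{eq:dendrel1}--\eqref{eq:dendrel3} of Proposition~\ref{p:dendrel}. No separate verification of those axioms against associativity \eqref{eq:assoc} and the Rota--Baxter identity \eqref{eq:baxterrel} is required, since they hold for \emph{any} dendriform monoid in $\gVec_S$.

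The only remaining work is to unpack the categorical operations into their graded components and match them with the stated formulas. On the summand $A_\alpha\otimes A_\beta$ of $(\Uc(A)\bullet\Uc(A))_{\alpha\beta}$, the map $\id\bullet R$ acts as $\id\otimes R_\beta$, because $R$ restricts to $R_\beta$ on the $\beta$-graded right factor; post-composing with $\mu={\bdot_{\alpha,\beta}}$ gives $x\prec_{\alpha,\beta}y = x\bdot_{\alpha,\beta}R_\beta(y)$. Symmetrically, $R\bullet\id$ acts as $R_\alpha\otimes\id$ on this summand, yielding $x\succ_{\alpha,\beta}y = R_\alpha(x)\bdot_{\alpha,\beta}y$, exactly as claimed.

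The step demanding the most care, and the main conceptual obstacle, is precisely this index bookkeeping: the single categorical morphism $R$ encodes the entire family $\{R_\alpha\}_{\alpha\in S}$, and one must track which grade labels each tensor factor of a Cauchy-product summand, so that $\id\bullet R$ inserts $R_\beta$ on the right and $R\bullet\id$ inserts $R_\alpha$ on the left. Once this translation is pinned down, the conclusion is immediate from Proposition~\ref{p:principle-mon}\eqref{it:baxter-dend}, and the tedious direct derivation of \eqref{eq:dendrel1}--\eqref{eq:dendrel3} is entirely bypassed.
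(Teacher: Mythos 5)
Your proposal is correct and follows exactly the paper's route: the paper likewise presents this proposition as the specialization of Proposition~\ref{p:principle-mon}\eqref{it:baxter-dend} to uniform objects in $\gVec_S$, with the only content being the component-wise unpacking of $\mu\circ(\id\bullet R)$ and $\mu\circ(R\bullet\id)$ on the summand $A_\alpha\otimes A_\beta$, which you carry out correctly. Your index bookkeeping (that $\id\bullet R$ inserts $R_\beta$ on the right factor and $R\bullet\id$ inserts $R_\alpha$ on the left) matches the stated formulas, so nothing is missing.
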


The result in the literature is of a more restricted nature. Suppose the operations $\bdot_{\alpha,\beta}$ are independent of both $\alpha$ and $\beta$ (so~$A$ is an ordinary associative algebra). In this case,
Definition~\ref{d:baxterrel} agrees with the definition given in~\cite[p.~541]{EGP07} and \cite[Example~1.3(d)]{Guo09}. And in the situation of Proposition~\ref{p:baxter-dend-rel}, $\prec_{\alpha,\beta}$ is independent of $\alpha$ and $\succ_{\alpha,\beta}$ is independent of $\beta$. In view of Proposition~\ref{p:dend-relative-family}, we obtain a dendriform family algebra structure on $A$. This is the result in \cite[Theorem~4.4.]{ZG19}.

\subsection[$S$-relative zinbiel, pre-Lie and pre-Poisson algebras]{$\boldsymbol{S}$-relative zinbiel, pre-Lie and pre-Poisson algebras}\label{ss:siblings-rel}

 We record the result of applying Definition \ref{d:Srel-alg} to these operads.

\begin{Proposition}\label{p:siblings-rel}
Let $S$ be a commutative semigroup.
\begin{enumerate}[$(i)$]\itemsep=0pt
\item An $S$-relative left zinbiel algebra consists of a vector space $Z$ equipped with operations $\ast_{\alpha,\beta}$ for each pair $(\alpha,\beta)\in S^2$ and such that
\begin{gather}\label{eq:zinbrel}
x\ast_{\alpha,\beta\gamma} (y\ast_{\beta,\gamma} z) = (x\ast_{\alpha,\beta} y)\ast_{\alpha\beta,\gamma} z + (y\ast_{\beta,\alpha} x)\ast_{\beta\alpha,\gamma} z
\end{gather}
for all $x,y,z\in Z$ and $\alpha,\beta,\gamma\in S$.
\item
An $S$-relative left pre-Lie algebra consists of a vector space $P$ equipped with operations $\circ_{\alpha,\beta}$ for each pair $(\alpha,\beta)\in S^2$ and such that
\begin{gather*}
x\circ_{\alpha,\beta\gamma} (y\circ_{\beta,\gamma} z) - (x\circ_{\alpha,\beta} y)\circ_{\alpha\beta,\gamma} z = y \circ_{\beta,\alpha\gamma} (x \circ_{\alpha,\gamma} z) - (y \circ_{\beta,\alpha} x)\circ_{\beta\alpha,\gamma} z
\end{gather*}
for all $x,y,z\in P$ and $\alpha,\beta,\gamma\in S$.
\item
An $S$-relative left pre-Poisson algebra consists of a vector space $B$ equipped with two operations $\circ_{\alpha,\beta}$ and $\ast_{\alpha,\beta}$ for each pair $(\alpha,\beta)\in S^2$ that turn it into an $S$-relative left pre-Lie and zinbiel algebra, respectively,
and are such that
\begin{gather*}
 (x\circ_{\alpha,\beta} y - y \circ_{\beta,\alpha} x)\ast_{\alpha\beta,\gamma} z = x\circ_{\alpha,\beta\gamma} (y\ast_{\beta,\gamma} z) - y\ast_{\beta,\alpha\gamma} (x\circ_{\alpha,\gamma} z),\\
 (x\ast_{\alpha,\beta} y + y\ast_{\beta,\alpha} x)\circ_{\alpha\beta,\gamma} z = x\ast_{\alpha,\beta\gamma} (y\circ_{\beta,\gamma} z) + y\ast_{\beta,\alpha\gamma} (x\circ_{\alpha,\gamma} z),
\end{gather*}
for all $x,y,z\in B$ and $\alpha,\beta,\gamma\in S$.
\end{enumerate}
\end{Proposition}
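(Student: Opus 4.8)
The plan is to follow the proof of Proposition~\ref{p:assoc-cat} verbatim in spirit, now applying Definition~\ref{d:Srel-alg} to the symmetric operads governing zinbiel, pre-Lie and pre-Poisson algebras. By that definition each structure is a $\tp$-monoid whose underlying $S$-graded space is uniform, say $\Uc(Z)$, $\Uc(P)$ or $\Uc(B)$, and the first task is to unwind what its structure maps are. A single binary operation of such a monoid is a morphism $\Uc(Z)\bullet\Uc(Z)\to\Uc(Z)$, and by the definition~\eqref{eq:cauchy} of the Cauchy product this morphism is a collection of linear maps $Z\otimes Z=Z_\alpha\otimes Z_\beta\to Z_{\alpha\beta}=Z$, one for each factorization of a grading; these are precisely the operations $\ast_{\alpha,\beta}$ and $\circ_{\alpha,\beta}$ indexed by pairs $(\alpha,\beta)\in S^2$ in the statement.

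It remains to translate the defining axioms, each of which is a commutative diagram on the threefold Cauchy product, by reading off its component on the summand $Z_\alpha\otimes Z_\beta\otimes Z_\gamma$ of the grade-$\alpha\beta\gamma$ part. For the left zinbiel operad the axiom equates the composite
\[
\big(\Uc(Z)\bullet\Uc(Z)\bullet\Uc(Z)\map{\id\bullet\ast}\Uc(Z)\bullet\Uc(Z)\map{\ast}\Uc(Z)\big)
\]
with the sum of the composite obtained by using $\ast\bullet\id$ in place of $\id\bullet\ast$ and that same composite precomposed with the symmetry $\sigma\bullet\id$. The first two terms specialize at once to $x\ast_{\alpha,\beta\gamma}(y\ast_{\beta,\gamma}z)$ and $(x\ast_{\alpha,\beta}y)\ast_{\alpha\beta,\gamma}z$. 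The third term is the one to watch: since the symmetry of $\gVec_S$ sends $Z_\alpha\otimes Z_\beta$ to $Z_\beta\otimes Z_\alpha$ by $x\otimes y\mapsto y\otimes x$ while interchanging the grading labels, the ensuing operation is $\ast_{\beta,\alpha}$ on the grade $\beta\alpha$, giving $(y\ast_{\beta,\alpha}x)\ast_{\beta\alpha,\gamma}z$. This reproduces~\eqref{eq:zinbrel}.

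The pre-Lie and pre-Poisson cases are entirely parallel. For pre-Lie one writes the axiom as the statement that the associator $x\otimes y\otimes z\mapsto x\circ(y\circ z)-(x\circ y)\circ z$ is invariant under precomposition with $\sigma\bullet\id$; specializing the component $(\alpha,\beta,\gamma)$ and tracking the symmetry exactly as above produces the swapped indices $\circ_{\beta,\alpha\gamma}$, $\circ_{\beta,\alpha}$ and the grading $\beta\alpha$ on the right-hand side of the displayed pre-Lie identity. For pre-Poisson the two compatibility axioms are commutative diagrams on the threefold Cauchy product mixing $\circ$ and $\ast$, each involving a single application of $\sigma$, and the same reading yields the two displayed equations. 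The only point where genuine care is needed --- and hence the main obstacle --- is this bookkeeping for the symmetry: because $\sigma$ interchanges the two tensor factors together with their gradings, every transposition occurring in an operadic axiom turns an index pair $(\alpha,\beta)$ into $(\beta,\alpha)$ in the corresponding operation and $\alpha\beta$ into $\beta\alpha$ in the corresponding grade. Once this convention is fixed, each translation is a routine component-wise reading of a commutative diagram, exactly as in Proposition~\ref{p:assoc-cat}.
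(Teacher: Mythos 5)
Your proposal is correct and follows exactly the route the paper intends: the paper states Proposition~\ref{p:siblings-rel} without proof, merely noting that it ``records the result of applying Definition~\ref{d:Srel-alg} to these operads,'' and your component-wise unwinding of the Cauchy product on uniform objects, modelled on the proof of Proposition~\ref{p:assoc-cat}, is precisely that computation. Your careful bookkeeping of the symmetry $\sigma$ (swapping both the tensor factors and the grading indices, so that $(\alpha,\beta)$ becomes $(\beta,\alpha)$ and the grade $\alpha\beta$ becomes $\beta\alpha$) is the one nontrivial point, and you handle it correctly.
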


The `family algebras' of Section \ref{s:relS} are special cases of the above notions.

\begin{Proposition}\label{p:siblings-relative-family}\
\begin{enumerate}[$(i)$]\itemsep=0pt
\item A zinbiel family algebra is the same as an $S$-relative zinbiel algebra in which the operations~$\ast_{\alpha,\beta}$ are independent of $\beta$.
\item A pre-Lie family algebra is the same as an $S$-relative pre-Lie algebra in which the operations~$\circ_{\alpha,\beta}$ are independent of $\beta$.
\item A pre-Poisson family algebra is the same as an $S$-relative pre-Poisson algebra in which the operations $\ast_{\alpha,\beta}$ and $\circ_{\alpha,\beta}$ are independent of $\beta$.
\end{enumerate}
\end{Proposition}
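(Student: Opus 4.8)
The plan is to prove all three equivalences by the single specialization argument already used for Proposition~\ref{p:dend-relative-family}: under the stated hypothesis that the operations are independent of their second index, each $S$-relative axiom listed in Proposition~\ref{p:siblings-rel} collapses verbatim to the corresponding family axiom of Section~\ref{s:relS}, and the converse is obtained by reading the same computation backwards. Thus for each part I would first introduce the abbreviated notation $\ast_\alpha := \ast_{\alpha,\beta}$ (respectively $\circ_\alpha := \circ_{\alpha,\beta}$), which is well defined precisely because of the independence hypothesis, and then substitute it into the relevant axiom.

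For part (i), substituting $\ast_\alpha$ into \eqref{eq:zinbrel} yields
\[
x\ast_\alpha(y\ast_\beta z) = (x\ast_\alpha y)\ast_{\alpha\beta} z + (y\ast_\beta x)\ast_{\beta\alpha} z.
\]
The only remaining step is to invoke the commutativity of $S$ to identify $\beta\alpha$ with $\alpha\beta$, after which both summands on the right carry the subscript $\alpha\beta$ and the identity becomes exactly the zinbiel family axiom~\eqref{eq:zinb}. Parts (ii) and (iii) run along identical lines: writing $\circ_\alpha$ in the pre-Lie axiom of Proposition~\ref{p:siblings-rel} and using $\beta\alpha=\alpha\beta$ recovers~\eqref{eq:prelie}, and imposing independence in both $\circ_{\alpha,\beta}$ and $\ast_{\alpha,\beta}$ turns the two pre-Poisson compatibility axioms into their family counterparts, the underlying zinbiel and pre-Lie structures being handled by parts (i) and (ii).

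The computation is mechanical and presents no genuine obstacle; the one point that requires care, and the only place the hypothesis that $S$ is commutative is used, is the systematic identification of the two a priori distinct products $\alpha\beta$ and $\beta\alpha$ that occur as subscripts in the $S$-relative axioms. I would simply make sure to track, for each operation and in each term, which of the two indices survives the specialization, so that these subscripts match the single index appearing in the family axioms.
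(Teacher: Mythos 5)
Your proposal is correct and coincides with the paper's (essentially omitted) argument: Proposition~\ref{p:siblings-relative-family} is proved exactly as Proposition~\ref{p:dend-relative-family}, by observing that under the independence hypothesis the axioms of Proposition~\ref{p:siblings-rel} specialize verbatim to \eqref{eq:zinb}, \eqref{eq:prelie} and the pre-Poisson family axioms, with commutativity of $S$ identifying the subscripts $\alpha\beta$ and $\beta\alpha$. No issues.
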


Specializing Proposition \ref{p:principle-smon} to uniform objects in $\gVec_S$ yields the following.

\begin{Proposition}\label{p:principle-relS}
 Let $D$ be an $S$-relative dendriform algebra and let $Z$ be an $S$-relative left zinbiel algebra.
\begin{enumerate}[$(i)$]\itemsep=0pt
\item Defining
$
x\circ_{\alpha,\beta} y = x\succ_{\alpha,\beta} y - y\prec_{\alpha,\beta} x
$
turns $D$ into an $S$-relative left pre-Lie algebra.
\item Suppose
$
x\succ_{\alpha,\beta} y = y\prec_{\alpha,\beta} y.
$
Then defining
$
x\ast _{\alpha,\beta} y = x\succ_{\alpha,\beta} y
$
turns $D$ into an $S$-relative left zinbiel algebra.
\item Defining
 $
x\prec _{\alpha,\beta} y = y\ast _{\alpha,\beta} x
$ and
$
x\succ _{\alpha,\beta} y = x \ast _{\alpha,\beta} y
$
turns $Z$ into an $S$-relative dendriform algebra. Moreover,
$
x\succ _{\alpha,\beta} y = y\prec _{\alpha,\beta}x.
$
\end{enumerate}
\end{Proposition}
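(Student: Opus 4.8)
The plan is to obtain all three parts by specializing Proposition~\ref{p:principle-smon} to the symmetric monoidal category $\gVec_S$ and then restricting attention to uniform objects, in exactly the spirit in which Proposition~\ref{p:assoc-cat} was deduced from the monoidal description of associative monoids. By Definition~\ref{d:Srel-alg}, the $S$-relative dendriform algebra $D$ is the same as a dendriform monoid $(\Uc(D),\prec,\succ)$ in $\gVec_S$, and the $S$-relative left zinbiel algebra $Z$ is the same as a left zinbiel monoid $(\Uc(Z),\ast)$. Since $S$ is commutative, $\gVec_S$ is symmetric, so Proposition~\ref{p:principle-smon} applies and already furnishes, at the abstract level, the asserted left pre-Lie, left zinbiel and dendriform monoid structures on the same uniform objects; these are therefore automatically $S$-relative algebras of the stated types. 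The only thing left to do is to rewrite each of the three categorical constructions in components.

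The single computation on which everything hinges is the component description of the symmetry $\sigma$ on a uniform object. At degree $\gamma$ one has $(\Uc(D)\bullet\Uc(D))_\gamma=\bigoplus_{\alpha\beta=\gamma}D\otimes D$, and $\sigma$ carries the summand indexed by $(\alpha,\beta)$ onto the summand indexed by $(\beta,\alpha)$ through the switch $x\otimes y\mapsto y\otimes x$, the reindexing being legitimate precisely because $\beta\alpha=\alpha\beta=\gamma$. Consequently, for any operation $m$ on a uniform object, with components $m_{\alpha,\beta}$, the composite $m\circ\sigma$ restricted to the $(\alpha,\beta)$-summand sends $x\otimes y$ to $m_{\beta,\alpha}(y,x)$. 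In words, post-composition with $\sigma$ both switches the two arguments and transposes their grading labels; I expect this bookkeeping to be the only genuine obstacle, since a swapped pair of arguments must always be accompanied by a swapped pair of subscripts.

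Granting this description, each part becomes a direct substitution into Proposition~\ref{p:principle-smon}, requiring no further calculation. For $(i)$ one feeds the component rule for $\sigma$ into the monoidal formula $\circ=\succ-\prec\circ\sigma$ to read off the displayed expression for $\circ_{\alpha,\beta}$, and Proposition~\ref{p:principle-smon}$(i)$ then guarantees the $S$-relative pre-Lie axiom. For $(ii)$ the stated hypothesis is exactly the component form of the monoidal condition $\succ=\prec\circ\sigma$, so setting $\ast=\succ$ and invoking Proposition~\ref{p:principle-smon}$(ii)$ produces a left zinbiel monoid, that is, axiom~\eqref{eq:zinbrel}. For $(iii)$ the assignments $\prec=\ast\circ\sigma$ and $\succ=\ast$ specialize to the two displayed formulas, while the concluding identity is precisely the component form of the final assertion $\succ=\prec\circ\sigma$ of Proposition~\ref{p:principle-smon}$(iii)$; the dendriform axioms are supplied by that same proposition. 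Thus all the substantive work has already been carried out at the level of monoids, and the present statement is its translation into the indexed notation via the $\sigma$-rule above.
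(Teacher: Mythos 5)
Your proposal is correct and follows exactly the paper's route: the paper's entire proof is the single sentence preceding the proposition, namely that it is obtained by specializing Proposition~\ref{p:principle-smon} to uniform objects of $\gVec_S$, and that is precisely what you carry out, with the component description of the symmetry $\sigma$ made explicit. One small remark: your (correct) rule that post-composing with $\sigma$ swaps both the arguments and their grading subscripts means the specialization actually yields $y\prec_{\beta,\alpha}x$, $y\ast_{\beta,\alpha}x$, etc., rather than the $(\alpha,\beta)$-indexed expressions displayed in the statement, so the un-transposed indices there (and the stray $y$ in part $(ii)$) are best read as typos in the statement rather than something your argument fails to produce.
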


Restricting to the case in which the operations are independent of the second index $\beta$ we obtain corresponding results for `family algebras', including Proposition~\ref{p:principle-family}.

\subsection[Morphisms of $S$-relative algebras]{Morphisms of $\boldsymbol{S}$-relative algebras}\label{ss:mor-rel}

Let $\tp$ be an operad and $S$ an associative semigroup (commutative unless $\tp$ is nonsymmetric).

\begin{Definition}\label{d:mor-rel}A morphism of $S$-relative $\tp$-algebras is a morphism of $\tp$-algebras in $\gVec_S$, that is, a morphism of $S$-graded spaces that preserves the operations.
\end{Definition}

\looseness=-1 A morphism $f\colon \Uc(A)\to\Uc(B)$ then consists of a family of linear maps $f_\alpha\colon A\to B$, one for each $\alpha\in S$, subject to axioms that depend on the type of algebra. For associative algebras, this is
\[
f_{\alpha\beta}(x\bdot_{\alpha,\beta} y) = f_\alpha(x) \bdot_{\alpha,\beta} f_\beta(y).
\]
The conditions are similarly straightforward for the other types of algebra in the previous sections.

One may consider the special case in which the maps $f_\alpha$ are independent of $\alpha$, that is, when the morphism consists of a single map $A\to B$. This is the type of morphism considered in the literature for `family algebras', particularly when `free family algebras' are constructed \cite{MZ20,ZGM20a}.

\section{Additional variants}\label{s:variant}

We discuss other settings in which the categorical approach of Section \ref{s:cat} applies. We also sketch an extension to the case of dimonoidal categories, which allows for an even more general formulation of the notion of dendriform algebra.

\subsection{Employing other monoidal categories}

As discussed in Section \ref{ss:mon}, one may consider the various types of algebra in any linear symmetric monoidal category $\Cs$. One may for example choose for $\Cs$ the category of $H$-comodules over a~bialgebra $H$, and obtain a notion of \emph{$H$-relative dendriform algebra}, as well as of all of the other types. The role of uniform $S$-graded spaces is played in this setting by cofree $H$-comodules. Explicitly, an $H$-relative dendriform algebra is a vector space $D$ equipped with operations
\[
x\succ_{a,b} y \qquad \text{and}\qquad x\prec_{a,b} y.
\]
These expressions belong to $D$ and are linear in each of $x,y\in D$ and $a,b\in H$. The middle axiom in~\eqref{eq:dend-mon}
becomes
\[
\sum (x\succ_{a_2,b_2} y)\prec_{a_1b_1,c} z = \sum x\succ_{a,b_1c_1}(y\prec_{b_2c_2} z)
\]
for $x,y,z\in D$, $a,b,c\in H$, where we have written $\Delta(a) = \sum a_1\otimes a_2$ for the comultiplication of~$H$.

Or one may be interested in the case when $\Cs$ is the category of
$S$-modules over a semi\-group~$S$, or the category of $H$-modules over a bialgebra~$H$, among many others.

A case of potential interest is afforded by the category of $S$-graded spaces with the monoidal structure twisted by an abelian $3$-cocycle, as in \cite[Remark~3.2] {JS93}. For example, the zinbiel axiom~\eqref{eq:zinbrel} becomes
\[
h(\alpha,\beta,\gamma)\, x\ast_{\alpha,\beta\gamma} (y\ast_{\beta,\gamma} z) = (x\ast_{\alpha,\beta} y)\ast_{\alpha\beta,\gamma} z + c(\alpha,\beta)\, (y\ast_{\beta,\alpha} x)\ast_{\beta\alpha,\gamma} z,
\]
where $h\colon S\times S\times S\to \Kb^\times$ and $c\colon S\times S\to\Kb^\times$ satisfy the conditions in \cite[p.~47] {JS93}. The function~$c$ should satisfy in addition $c(\alpha,\beta)=c(\beta,\alpha)$ to ensure that the twisted braiding in~$\gVec_S$ is a~symmetry.

One may again generalize and consider (co)modules over a (co)quasi-bialgebra \cite[Chapter~XV]{Kassel95}.

The gist of this note is that the basic properties of the various types will be true in each case and do not need separate proofs.

\subsection{Employing dimonoidal categories}\label{ss:dimonoidal}

A variant of the notion of dendriform algebra has been introduced by Gao, Guo and Zhang in
\cite[Definition~3.1]{ZGG20}, under the name of \emph{matching dendriform algebras}. The operations of these objects are indexed by elements of a~set~$S$. This notion differs from that of dendriform family algebra, and does not arise as a special case of the notion of $S$-relative dendriform algebra. Nevertheless, there exists a categorical approach to this and even more general notions. We briefly sketch the main ingredients next.

Let $S$ be a \emph{dimonoid}. The set $S$ carries two operations $\dashv$ and $\vdash$ satisfying the axioms given in \cite[Definition 1.1]{Loday01}.
The category $\gVec_S$ carries then two monoidal structures $\oleft$ and $\oright$. Each is defined from one of the operations in $S$ by means of \eqref{eq:cauchy}.
The axioms for $S$ imply that the two structures on $\gVec_S$ are linked by isomorphisms as follows
\begin{subequations}\label{eq:dimon-cat}
\begin{gather}
A\oleft (B\oleft C) \cong (A\oleft B) \oleft C \cong A \oleft (B\oright C),\\
(A\oright B)\oleft C \cong A\oright (B\oleft C),\\
(A\oleft B)\oright C \cong A\oright (B\oright C) \cong (A\oright B)\oright C.
\end{gather}
\end{subequations}
These isomorphisms satisfy certain coherent conditions. (We have not worked this out in detail. They should extend Mac Lane's pentagon.) We say then that $\gVec_S$ is a \emph{dimonoidal category}.\footnote{Dimonoidal categories are not the same as the \emph{linearly distributive categories} of \cite[Section~1]{CS99}, previously called \emph{weakly distributive categories} in~\cite{CS92,CS97}. Some of the coherence conditions in \cite[Section~2.1]{CS92} should be common to both notions. Dimonoidal categories also differ from the \emph{$2$-monoidal categories} of \cite[Chapter~6]{am10}, called \emph{duoidal categories} in more recent literature \cite{Street12}.}

The key point is that in a linear dimonoidal category, one may formulate a notion of dendriform monoid: this is an object $D$ equipped with maps
\[
D\oleft D \map{\prec} D \qquad \text{and}\qquad D\oright D \map{\succ} D
\]
subject to $3$ axioms generalizing~\eqref{eq:dend-mon}. For example, the first of these is the following equality
\begin{gather*}
 \bigl((D\oleft D)\oleft D \cong D\oleft(D\oleft D) \map{\id\oleft\prec}D\oleft D \map{\prec} D\bigr)\\
\qquad\quad{} + \bigl((D\oleft D)\oleft D \cong D\oleft(D\oright D) \map{\id\oleft\succ}D\oleft D \map{\prec} D\bigr)\\
\qquad{} = \bigl((D\oleft D)\oleft D \map{\prec\oleft\id} D\oleft D \map{\prec} D\bigr).
\end{gather*}
The reader can easily write down the other two axioms, working from~\eqref{eq:dend-mon} and~\eqref{eq:dimon-cat}.

One then has the notion of dendriform monoid in the dimonoidal category $\gVec_S$. One may next consider the case in which the underlying object is uniform, as in Section~\ref{ss:uniform}. This yields a notion of dendriform algebra relative to a dimonoid $S$. As in Section~\ref{ss:dendrel}, the operations of such an algebra are indexed by pairs $(\alpha,\beta)\in S^2$. The axioms are as follows
\begin{gather*}
(x\prec_{\alpha,\beta} y)\prec_{\alpha\dashv\beta,\gamma} z = x\prec_{\alpha,\beta\dashv\gamma} (y\prec_{\beta,\gamma} z) + x\prec_{\alpha,\beta\vdash\gamma} (y\succ_{\beta,\gamma} z),\\
(x \succ_{\alpha,\beta} y)\prec_{\alpha\vdash\beta,\gamma} z = x\succ_{\alpha,\beta\dashv\gamma} (y\prec_{\beta,\gamma} z),\\
(x\prec_{\alpha,\beta} y)\succ_{\alpha\dashv\beta,\gamma} z + (x\succ_{\alpha,\beta} y)\succ_{\alpha\vdash\beta,\gamma} z = x\succ_{\alpha,\beta\vdash\gamma} (y\succ_{\beta,\gamma} z).
\end{gather*}
One may proceed and consider the special case in which $\prec_{\alpha,\beta}$ is independent of $\alpha$ and $\succ_{\alpha,\beta}$ is independent ot $\beta$. This yields a notion of dendriform algebra in which the operations are indexed by elements of the dimonoid $S$. The axioms are now
\begin{subequations} \label{eq:dend-family-matching}
\begin{gather}
(x\prec_{\alpha} y)\prec_{\beta} z = x\prec_{\alpha\dashv\beta} (y\prec_{\beta} z) + x\prec_{\alpha\vdash\beta} (y\succ_{\alpha} z),\\
(x \succ_{\alpha} y)\prec_{\beta} z = x\succ_{\alpha} (y\prec_{\beta} z),\\
(x\prec_{\beta} y)\succ_{\alpha\dashv\beta} z + (x\succ_{\alpha} y)\succ_{\alpha\vdash\beta} z = x\succ_{\alpha} (y\succ_{\beta} z).
\end{gather}
\end{subequations}

Finally, we may further specialize in two different ways. First, if the operations of the dimonoid~$S$ are simply
\[
\alpha\dashv\beta = \alpha \qquad \text{and}\qquad \alpha\vdash\beta = \beta,
\]
the above notion is precisely that of matching dendriform algebra. Second, if the operations of the dimonoid~$S$ satisfy
\[
\alpha\dashv\beta=\alpha\vdash\beta
\]
(namely, if $S$ is merely an associative monoid), we simply recover the construction of Section \ref{s:cat}, and the above notion is that of a dendriform family algebra.

In order to develop a corresponding approach to zinbiel and pre-Lie algebras, one should work with dimonoidal categories equipped with isomorphisms
\[
A\oleft B \cong B\oright A,
\]
again subject to coherent conditions. The category $\gVec_S$ constitutes an example when~$S$ is a~\emph{permutative monoid}, as in \cite[Section~1]{Chapoton01}.

\begin{Example}\label{eg:dend-family-matching}
Let $X$ be a set. The free dendriform family algebra on~$X$ (for a given semi\-group~$S$) was described in \cite[Section~3.2]{ZGM20a} and the free matching dendriform algebra on~$X$ (for a given set~$S$) was described in \cite[Section~3.2]{ZGG20}. The underlying set is the same for both algebras. We next observe that the two constructions may be unified by turning the same set into the free algebra defined by axioms \eqref{eq:dend-family-matching} (for a given a dimonoid~$S$). The idea behind all these constructions goes back to Loday \cite[Section~5.5]{Loday01}.

We employ the same setting as in both \cite[Section~3.2]{ZGM20a} and \cite[Section~3.2]{ZGG20}. Let $Y$ be the set of planar rooted binary trees in which the vertices are decorated by elements of $X$ and the internal edges are decorated by elements of $S$. (The leaves are not regarded as vertices. An internal edge joins two vertices.) Let $\widehat{Y}=Y\cup\{e\}$, where $e$ stands for a new symbol that we may think of as the tree with no vertices.

Note that any $t\in Y$ is of the form
\[
t =
\begin{gathered}
\begin{tikzpicture}[scale=1]
\draw[thick] (-1,1) node {$\bullet$} -- (0,0) node {$\bullet$} -- (1,1) node {$\bullet$};
\draw[mydashed] (-1.75,1.75) -- (-1,1) -- (-0.25,1.75) ;
\draw[mydashed] (0.25,1.75) -- (1,1) -- (1.75,1.75) ;
\draw (0,0) node[below] {$y$};
\draw (-1,1.7) node {$t_1$}; \draw (1,1.7) node {$t_2$};
\draw (-0.7,0.3) node {$\tau_1$}; \draw (0.7,0.3) node {$\tau_2$};
\end{tikzpicture}
\end{gathered}
\]
where $y\in X$ is the label of the root of $t$, the trees $t_1,t_2\in \widehat{Y}$ are the left and right subtrees, and $\tau_1,\tau_2\in S$ are the labels of the internal edges stemming from the root. It is possible that $t_i=e$, in which case there is no vertex in that subtree and the label $\tau_i$ is not defined. We employ similar notation for another tree $s\in Y$: the label of the root is $x\in X$, the left and right subtrees are $s_1$ and $s_2$, the labels of the internal edges at the root are $\sigma_1$ and $\sigma_2$.

The operations on the vector space with basis $Y$ are defined by means of the following recursive formulas
\begin{gather*}
s \prec_{\alpha} t = \quad
\begin{gathered}
\begin{tikzpicture}[scale=1.1]
\draw[thick] (-1,1) node {$\bullet$} -- (0,0) node {$\bullet$} -- (1,1) node {$\bullet$};
\draw[mydashed] (-1.75,1.75) -- (-1,1) -- (-0.25,1.75) ;
\draw[mydashed] (0.25,1.75) -- (1,1) -- (1.75,1.75) ;
\draw (0,0) node[below] {$x$};
\draw (-1,1.9) node {$s_1$}; \draw (1,1.9) node {$s_2\prec_\alpha t$};
\draw (-0.7,0.3) node {$\sigma_1$}; \draw (1.1,0.3) node {$\sigma_2\dashv\alpha$};
\end{tikzpicture}
\end{gathered}
\quad + \quad
\begin{gathered}
\begin{tikzpicture}[scale=1.1]
\draw[thick] (-1,1) node {$\bullet$} -- (0,0) node {$\bullet$} -- (1,1) node {$\bullet$};
\draw[mydashed] (-1.75,1.75) -- (-1,1) -- (-0.25,1.75) ;
\draw[mydashed] (0.25,1.75) -- (1,1) -- (1.75,1.75) ;
\draw (0,0) node[below] {$x$};
\draw (-1,1.9) node {$s_1$}; \draw (1,1.9) node {$s_2\succ_{\sigma_2} t$};
\draw (-0.7,0.3) node {$\sigma_1$}; \draw (1.1,0.3) node {$\sigma_2\vdash\alpha$};
\end{tikzpicture}
\end{gathered}
\\
s \succ_{\alpha} t = \quad
\begin{gathered}
\begin{tikzpicture}[scale=1.1]
\draw[thick] (-1,1) node {$\bullet$} -- (0,0) node {$\bullet$} -- (1,1) node {$\bullet$};
\draw[mydashed] (-1.75,1.75) -- (-1,1) -- (-0.25,1.75) ;
\draw[mydashed] (0.25,1.75) -- (1,1) -- (1.75,1.75) ;
\draw (0,0) node[below] {$y$};
\draw (-1,1.9) node {$s\prec_{\tau_1}t_1$}; \draw (1,1.9) node {$t_2$};
\draw (-1.1,0.3) node {$\alpha\dashv\tau_1$}; \draw (0.7,0.3) node {$\tau_2$};
\end{tikzpicture}
\end{gathered}
\quad + \quad
\begin{gathered}
\begin{tikzpicture}[scale=1.1]
\draw[thick] (-1,1) node {$\bullet$} -- (0,0) node {$\bullet$} -- (1,1) node {$\bullet$};
\draw[mydashed] (-1.75,1.75) -- (-1,1) -- (-0.25,1.75) ;
\draw[mydashed] (0.25,1.75) -- (1,1) -- (1.75,1.75) ;
\draw (0,0) node[below] {$y$};
\draw (-1,1.9) node {$s\succ_\alpha t_1$}; \draw (1,1.9) node {$t_2$};
\draw (-1.1,0.3) node {$\alpha\vdash \tau_1$}; \draw (0.7,0.3) node {$\tau_2$};
\end{tikzpicture}
\end{gathered}
\end{gather*}
The recursions are started by
 \begin{gather*}
 s \prec_{\alpha} e = s, \qquad s\succ_{\alpha} e = 0,\\
 e \prec_{\alpha} t = 0, \qquad e \succ_{\alpha} t = t.
 \end{gather*}
\end{Example}

We close by mentioning that Foissy has introduced a further variant of the notion of dendriform algebra in \cite[Definition 11]{Foissy20}. As for the notion defined by~\eqref{eq:dend-family-matching}, Foissy's notion includes as special cases both dendriform family algebras and matching dendriform algebras.
Does there exist a categorical approach to this notion?

\subsection*{Acknowledgements}

The author thanks the referees for pertinent comments and suggestions. Research partially supported by Simons grant 560656.

\pdfbookmark[1]{References}{ref}
\LastPageEnding

\end{document}